\newtheorem{thm}{Theorem}
\newdefinition{cor}{Corollary}
\newdefinition{example}{Example}
\newproof{pf}{Proof}
\newcommand {\Dt} {\Delta t}
\newcommand {\dd} {\text{d}}
   \newcommand{\bff}{\mathbf{f}}
\newcommand{\uA}{\bar u^A}
\newcommand{\uD}{\bar u^D}
\newcommand{\lambdaS}{a}
  \newcommand{\R}{\mathbb{R}}
  \newdefinition{rmk}{Remark}
  \newcommand{\pd}[2]{\frac{\partial #1}{\partial #2}}
\newcommand{\beq}{\begin{equation}}
\newcommand{\eeq}{\end{equation}}
\newcommand{\bfn}{{\bf n}}
\newcommand{\bfx}{{\bf x}}
\newcommand {\jinN} {j\in\mathcal N_i}
\newcommand {\N} {\mathcal N}
\newcommand{\RK}{{\rm RK}}
\newcommand{\FCT}{{\rm FCT}}
\newcommand{\GMC}{{\rm GMC}}
\def\ps@pprintTitle{%
  \let\@oddhead\@empty
  \let\@evenhead\@empty
  \def\@oddfoot{
    \footnotesize\itshape
    % line below modified from elsarticle.cls
    %\ifx\@journal\@empty Elsevier
    %\else\@journal\fi
    \hfill\today
  }%
  \let\@evenfoot\@oddfoot}
\begin{document}

\begin{frontmatter} % statt \maketitle um email adressen anzuzeigen

\title{
  Maximum principle preserving space and time flux limiting for
  Diagonally Implicit Runge-Kutta
  discretizations of scalar convection-diffusion equations}
  
\author[KAUST]{Manuel Quezada de Luna\corref{cor1}}
\ead{manuel.quezada@kaust.edu.sa}
\cortext[cor1]{Corresponding author}

\author[KAUST]{David I. Ketcheson}
\ead{david.ketcheson@kaust.edu.sa}

\address[KAUST]{King Abdullah University of Science and Technology (KAUST)\\ Thuwal 23955-6900, Saudi Arabia}

\journal{Elsevier}

\begin{abstract}
    We provide a framework for high-order discretizations of nonlinear
    scalar convection-diffusion equations that satisfy a discrete maximum
    principle.  The resulting schemes can have arbitrarily high order accuracy in time
    and space, and can be stable and maximum-principle-preserving (MPP)
    with no step size restriction.  The schemes are based on a two-tiered limiting
    strategy, starting with a high-order limiter-based method that may have
    small oscillations or maximum-principle violations, followed by an
    additional limiting step that removes these violations while preserving
    high order accuracy.  The desirable properties of the resulting schemes 
    are demonstrated through several numerical examples.
\end{abstract}

\begin{keyword}
  scalar convection-diffusion equations;
  positivity-preserving implicit schemes; 
  diagonally implicit Runge-Kutta time stepping; 
  flux-corrected transport; 
  monolithic convex limiting
\end{keyword}
\end{frontmatter}

%{\color{red}
%  \section*{TO-DO}
%  \begin{itemize}
%  \item Add acknowledgments after talking to Dmitri. 
%  \end{itemize}
%}

% ********************************** %
% ********** INTRODUCTION ********** %
% ********************************** %
\section{Introduction}
In this work we develop numerical methods for the
scalar nonlinear convection-diffusion equation
\beq
\pd{u}{t}+\nabla\cdot\mathbf{f}(u)-\nabla \cdot (c(u,\bfx) \nabla u)=0,
\qquad \bfx \in \Omega \subset\R^d,\quad d\in\{1,2,3\}, \label{conslaw}
\eeq
with $c(u,\bfx)\geq 0$.  We focus on methods that work well for regimes
ranging from the hyperbolic setting ($c=0$) to the diffusion-dominated setting.

A great deal of research has been devoted to the development of
numerical methods for hyperbolic conservation laws that are accurate
and preserve qualitative solution properties, such as guaranteeing
\emph{a priori} bounds on the solution or avoiding spurious oscillations.
Among the most useful schemes available are high-order finite volume
methods, which tend to exhibit much less oscillation than generic schemes,
but can still produce small over- or undershoots near discontinuities.
For some applications, these violations of the bounds are unacceptable
and more robust schemes are thus required. 

Certain broad theoretical results establish the difficulty of
guaranteeing properties like positivity or a maximum principle.
These often involve a tradeoff between accuracy and robustness.
Godunov's theorem dictates that any linear PDE discretization that does not generate
new local extrema must either be nonlinear or only first-order accurate.
Similarly, landmark results by Bolley \& Crouzeix \cite{bolley1978} and by Spijker \cite{spijker1983}
establish that any general linear method for initial value ODEs that is guaranteed
to satisfy a positivity or monotonicity property can be only first-order accurate.

\subsection*{High-order bound-preserving discretizations}
High-order discretizations that satisfy positivity, monotonicity, or 
total variation diminishing (TVD) properties must therefore fall outside the 
usual straightforward discretizations.
In the context of hyperbolic PDEs, such methods have constituted a major area of research
for several decades.  Well established techniques include second-order methods
based on TVD limiters, as well as flux corrected transport (FCT) methods.
Each of these imposes a local bound on solution updates, but such
methods
are at most second order accurate \cite{osher1984high, zhang2010maximum}, due to
being too restrictive around smooth extrema. 

Some approaches, like weighted essentially non-oscillatory (WENO) methods,
give up on providing strict preservation of the maximum principle
in favor of achieving better than second-order accuracy. 
Within the context of finite elements, high-order Bernstein polynomials can be employed with 
FCT-like methods to impose local bounds; see for example 
\cite{anderson2017high,hajduk2021monolithic,lohmann2017flux}.
To recover high-order accuracy in \cite{lohmann2017flux}, the authors 
use smoothness indicators to deactivate the limiters around smooth extrema. 
Therefore, small violations of the maximum principle could occur. 

%The latter approach imposes the following local bounds on the solution:
%\begin{align}\label{local_bounds}
%  \min_{j\in \N_i} u_i^n \leq u_i^{n+1} \leq \max_{j\in \N_i} u_i^n
%\end{align}
%where $n$ is the time step, $i$ is the index in the spatial grid, 
%and $\N_i$ is the index set that contains the stencil of the spatial discretization at node $i$. 
High order accuracy can be achieved by instead imposing the global bounds 
\begin{align}\label{global_bounds}
  \min_j u^n_j \leq u_i^{n+1} \leq \max_j u^n_j
\end{align}
%is important to prevent internal oscillations; however, methods that impose \eqref{local_bounds}
Note that \eqref{global_bounds} is essentially a one-step discrete form of
the maximum principle, which states that a solution must remain bounded by its maximum 
(and/or minimum) value at the beginning of the simulation. 
We will refer to schemes that satisfy \eqref{global_bounds} as
\emph{maximum principle preserving} (MPP).
%Tose results are qualitatively similar to those obtained using WENO reconstruction.
Higher than second-order full discretizations that strictly satisfy \eqref{global_bounds} are a
more recent development and include \cite{zhang2011maximum,xiong2015high,chen2016third}.
%The idea behind such methods is to relax the conditions that lead to second-order accuracy
%without violating the global bounds \eqref{global_bounds} while still preserving the high-order 
%accuracy of the underlying spatial discretization. 

In the present work, our starting point is a spatial discretization, like WENO
or the method presented in \cite{lohmann2017flux} for finite elements, that
uses limiters to achieve a considerable reduction of oscillations without
degrading the formal order of accuracy of the solution (but may violate
\eqref{global_bounds}).  We modify this discretization by performing an extra
flux limiting step that enforces \eqref{global_bounds} strictly.
Thus the resulting method employs a two-tier limiting strategy.

\subsection*{Bound-preserving time discretization}
Most of the works cited above are based on method-of-lines finite
volume or (dis)continuous Galerkin finite element discretizations.
A key difficulty in this area is to find a time integration
scheme that preserves the boundedness properties of the semi-discrete
scheme.  This difficulty is commonly solved by applying a strong stability preserving 
(SSP) Runge-Kutta time discretization.  This
means that the schemes are limited to 4th- or 6th-order accuracy,
depending on whether an explicit or implicit method is used \cite{SSPbook}.  Furthermore,
existing high-order implicit SSP methods are not A-stable, so that such
schemes will (when applied to \eqref{conslaw}) be subject to severe time step
restrictions even if an implicit time discretization is used.  

In \cite{arbogast2020third}, the authors take a different approach by combining
the backward Euler method with a third-order
fully-implicit Runge-Kutta method in order to have L-stability.  
However, the spatial discretization is based on WENO reconstruction,
which leads to a scheme that does not strictly satisfy the maximum principle. 
Herein, we provide a general technique that allows the use of any high-order 
Runge-Kutta method with a spatial discretization based on WENO reconstruction. 
The time discretization need not be SSP, can be of arbitrarily high-order, 
and can be chosen to be diagonally implicit.
To obtain a full discretization that satisfies the maximum principle, we combine 
the high-order method with a low-order MPP scheme based on 
backward Euler with local Lax-Friedrichs numerical fluxes. 
Therefore, we obtain anti-diffusive fluxes (or flux corrections) that contain corrections 
to the spatial and temporal components of the low-order scheme. 
The FCT method has been used before with fluxes that combine corrections in space and time; 
see for instance \cite{lee2010multistep,lohmann2017flux,xiong2015high,yang2016high}.
However, those references are based on combining explicit schemes.
As a result, their flux limited update is MPP only under a restricted time step. 
It is also worth mentioning \cite{feng2019time}, wherein the authors use continuous 
Galerkin finite elements in space and discontinuous Galerkin finite elements in time to 
obtain a full discretization that is then modified via FCT to obtain an MPP method. 
The baseline discretization in \cite{feng2019time} resembles an implicit scheme.

\subsection*{Schemes for problems with parabolic terms}
Much work in this area has focused on the purely hyperbolic setting,
since parabolic terms tend to have a smoothing effect and may make
it less challenging to achieve discrete boundedness properties.
Some recent works have focused on convection-diffusion applications
where convection is dominant and it is still difficult to
avoid oscillations or strictly satisfy the maximum principle
\cite{zhang2012maximum,chen2016third,xiong2015high,yang2016high}.
These methods are explicit, and will be subject to tight restrictions
on the time step when the diffusion coefficient is not small.
In addition, strict preservation of the maximum principle
is fulfilled only if a time step restriction is satisfied. 
In this work, we consider implicit schemes. As a result, we obtain a high-order 
full discretization that is MPP with no time step restriction. 

\subsection{Our contribution}
The techniques proposed here are closely related to those of \cite{exGMCL},
which provided fully discrete explicit schemes for hyperbolic problems.
Here we extend the approach to problems that include diffusion and
to implicit time integration.
We make use of two ideas based on general techniques that have
long been used in this area.  The first is that of combining a
low-order method that satisfies the desired property with a high-order
method that may not, in such a way that the high-order "correction" is
guaranteed not to break the property.  The second is reminiscent of
Harten's Theorem \cite{harten1974method,harten1997high}, and consists
of writing a scheme as a sum of updates, each of which is proportional
to the difference between the current state and a neighboring state.
By bounding the neighboring states and the proportions, one obtains
bounds on the updated solution.
%This has been exploited more recently for example in \cite{guermond2019invariant}.

The main contributions of our new schemes are: 
i) strict enforcement of the maximum principle \eqref{global_bounds}; 
ii) arbitrarily high-order accuracy in time and space; 
iii) application not only to hyperbolic problems but also to convection-diffusion equations; 
and iv) linear stability and MPP under arbitrarily large time step sizes.
The result is a framework to obtain an arbitrarily high-order full discretization for 
the convection-diffusion equation that is strictly MPP for time steps of any size. 

\subsection{Outline}
The rest of this manuscript is organized as follows. 
In Section \ref{sec:flux_correction}, we review a general and well-known framework for discretizations of 
convection-diffusion problems based on limiting flux corrections. This methodology relies on low- and high-order 
schemes, which we present in Sections \ref{sec:low_order} and \ref{sec:high_order}, respectively. 
Afterwards, in Sections \ref{sec:fct} and \ref{sec:gmc}, we use two flux limiting techniques 
that guarantee the scheme is MPP.
In Section \ref{sec:int_stages}, we discuss how to impose the MPP on the 
intermediate solutions within the stages of the Runge-Kutta method.
In Section \ref{sec:num}, we present four one-dimensional 
tests and four two-dimensional tests. 
In our numerical examples, we use a 5th-order WENO discretization and a
5th-order singly  diagonally implicit Runge-Kutta (SDIRK) method. 
Concluding remarks are given in Section \ref{sec:conclusions}. 
For completeness, in \ref{sec:jacobians}, we discuss how we solve the 
algebraic equations associated with the implicit discretizations.

% ******************************************** %
% ********** SPATIAL DISCRETIZATION ********** %
% ******************************************** %
\section{Flux correction}\label{sec:flux_correction}
We are interested in a finite volume spatial discretization
of the convection-diffusion equation \eqref{conslaw}.
For simplicity, we assume the domain  $\Omega$ is a hyperrectangle and prescribe
periodic boundary conditions on $\partial\Omega$. The initial condition is given by
\beq
u(\bfx, 0)=u_0(\bfx) \qquad\mbox{in}\ \Omega.
\eeq

We partition $\Omega$ into $N_h$ cells $K_i$, where $i=1,\dots N_h$.
Let $\partial K_i$ denote the boundary of $K_i$,
$S_{ij}=\partial K_i\cap\partial K_j$ denote the face shared between cells $K_i$ and $K_j$,
$|K_i|$ and $|S_{ij}|$ denote the volume and the area of $K_i$ and $S_{ij}$, respectively,
and $\N_i$ denote the set of indices of neighbors of $K_i$ that share a face with it. 
To obtain a finite volume semi-discretization,
we integrate \eqref{conslaw} over each cell, apply the divergence theorem
to the convective and diffusive terms, and approximate the fluxes via
\begin{align*}
  F_{ij}(u,\bfx,t) \approx \bff(u)\cdot\bfn_{ij},
  \qquad 
  P_{ij}(u,\bfx,t) \approx c(u,\bfx)(\nabla u\cdot\bfn_{ij}),
\end{align*}
on each face $S_{ij}$.
Here $\bfn_{ij}$ is the unit outward normal on face $S_{ij}$.
Doing so, we get the spatial semi-discretization 
\begin{align}\label{semi}
  |K_i|\frac{\dd u_i}{\dd t} &= - \sum_{\jinN}\int_{S_{ij}}\left[F_{ij}(u,\bfx,t)-P_{ij}(u,\bfx,t)\right]ds,
  \qquad i=1,\dots, N_h,
\end{align}
where $u_i$ is the average of the solution over cell $K_i$.
%\begin{align}
%  G_{ij} \approx \frac{1}{\Dt}\int_{t_n}^{t_{n+1}} \int_{S_{ij}}(\mathbf{f}(u(\bfx,t)) - c(u(\bfx,t),\bfx)\nabla(u(\bfx,t)))  \cdot \bfn_{ij} ds dt.
%\end{align}
We consider full discretizations of the form
\begin{align} \label{flux-scheme}
  u^{n+1}_i & = u^n_i - \frac{\Dt}{|K_i|} \sum_{\jinN} \int_{S_{ij}}G_{ij}(\bfx) ds,
\end{align}
where 
\begin{align*}
  G_{ij}(\bfx)\approx\frac{1}{\Dt}\int_{t_n}^{t_{n+1}} (\mathbf{f}(u(\bfx,t)) - c(u(\bfx,t),\bfx)\nabla(u(\bfx,t)))  \cdot \bfn_{ij} dt
\end{align*}
is a time-averaged approximation of the combined flux $F_{ij} - P_{ij}$.
The specific form of $G_{ij}$ depends on the time integration scheme. In Sections 
\ref{sec:low_order} and \ref{sec:high_order}, we consider 
backward Euler and diagonally-implicit Runge-Kutta (DIRK) methods, respectively. 

The basic idea used in this work was proposed almost 40 years ago in the
\emph{hybrid schemes} of Harten \& Zwas \cite{harten1972self} and in the
flux-corrected transport (FCT) algorithm of Boris \& Book \cite{boris1973flux}.
It has been employed in countless other methods proposed since then, and is explained
neatly for instance in \cite[Section~16.2]{leveque1992numerical} and \cite{kuzmin2012flux}.
The idea is to define two different numerical fluxes $G^L_{ij}$ and $G^H_{ij}$, leading
to two schemes, each of the form \eqref{flux-scheme}.
The low-order flux $G^L_{ij}$ is inaccurate but yields an update \eqref{flux-scheme}
that satisfies a desired bound.  The high-order flux $G^H_{ij}$ is more accurate
but does not generally satisfy the bound.  We then apply the scheme
\begin{align} \label{limited-flux-scheme}
  u^{n+1}_i & = u^n_i - \frac{\Dt}{|K_i|} \sum_{\jinN} \int_{S_{ij}} 
  \left[G^L_{ij}(\bfx) - \alpha_{ij}(G^L_{ij}(\bfx)-G^H_{ij}(\bfx))\right] ds,
\end{align}
where the \emph{limiters} $\alpha_{ij}\in [0,1]$ depend on $u$ and are chosen to
maximize the accuracy while still enforcing the bound.
We refer to the difference $(G^L_{ij}-G^H_{ij})$ as the \emph{flux correction},
since it does not approximate the flux itself but rather provides a high-order
correction to it.
Herein, we approximate the fluxes $G_{ij}^{L/H}$ using the point value at the center 
of each face.

As described in Section \ref{sec:low_order}, using backward Euler and the
Lax-Friedrichs numerical flux yields low-order fluxes $G_{ij}^L$ that are MPP
with no restriction on the time step size. 
In Section \ref{sec:high_order}, we use arbitrarily high-order methods based on WENO
reconstruction and DIRK time integration to define $G_{ij}^H$. 
We present two approaches to choosing the values of the limiters; the
first is based on Zalesak's FCT limiters \cite{zalesak1979fully} (but applied in
time as well as space) while the second follows
the recently-proposed \emph{monolithic convex limiting} approach \cite{exGMCL}.

% ************************************** %
% ********** LOW ORDER SCHEME ********** %
% ************************************** %
\section{Low-order scheme}\label{sec:low_order}
In this section, we define low-order convective and diffusive fluxes, which we denote by
$F_{ij}^L(u)\approx \bff(u)\cdot\bfn_{ij}$ and $P_{ij}^L(u)\approx (c(u,x)\nabla u)\cdot\bfn_{ij}$, respectively.
%We use these fluxes in \eqref{semi} to obtain an MPP spatial semi-discretization. 
%Then, using the backward Euler method, we get a full discretization that is 
%MPP with no restriction on the time step size. 
%Finally, we get the corresponding low-order fluxes $G_{ij}^L$ needed in scheme \eqref{limited-flux-scheme}.

For the convective fluxes we use the local Lax-Friedrichs flux,
also known as Rusanov's flux, given by \cite{rusanov1961calculation}
\begin{align}\label{conv-flux_low_order}
  F^L_{ij}(u) =
    \bfn_{ij}\cdot \frac{\bff(u_j)+\bff(u_i)}{2}
    -\frac{1}{2}(u_j-u_i)\lambda^A_{ij},
\end{align}
where $u_j$ denotes the solution average over cell $K_j$ and $\lambda_{ij}^A>0$ is an
upper bound for the wave speed of the Riemann problem associated with face $S_{ij}$.
In general, $\lambda_{ij}^A$ is a function of $u$. For simplicity, we omit the dependence herein. 
For a uniform and structured grid, the diffusive flux can be taken as
\begin{align}\label{diff-flux_low_order}
  P_{ij}^L(u) =
    \underbrace{c\left(\frac{u_{i}+u_j}{2},\frac{\bfx_i+\bfx_j}{2}\right)}_{\qquad \textstyle =: c_{ij}}
    \frac{u_{j}-u_i}{|\bfx_j-\bfx_i|}.
\end{align}
For more general grids, we can follow e.g. \cite{nikitin2014monotone}. 
Plugging \eqref{conv-flux_low_order} and \eqref{diff-flux_low_order} into \eqref{semi}, we get
\begin{align}\label{low_order_semi1}
  |K_i|\frac{\dd u_i}{\dd t}
  = -\sum_{\jinN} |S_{ij}|
  \left[
    \bfn_{ij}\cdot \frac{\bff(u_j)+\bff(u_i)}{2}
    -\frac{1}{2}\lambda^A_{ij}(u_j-u_i)
    -c_{ij}\frac{u_{j}-u_i}{|\bfx_j-\bfx_i|}
    \right].
\end{align}
Note that the scheme is mass-conservative since the right hand side above is antisymmetric with
respect to the exchange of $i$ and $j$.
For the purely hyperbolic case ($c=0$) of \eqref{conslaw}, Guermond and Popov \cite{guermond2016invariant}
considered an equivalent representation of \eqref{low_order_semi1} in terms of upwinded averages 
\begin{align}\label{bar_state_A}
  \bar u_{ij}^A(u) := \frac{u_j+u_i}{2}
  -\bfn_{ij}\cdot \frac{\bff(u_j)-\bff(u_i)}{2\lambda_{ij}^A}.
\end{align}
We remark that these states appear naturally in certain approximate Riemann solvers,
where one assumes that the solution of the Riemann problem
with data $u_i$ and $u_j$ consists of two traveling discontinuities, as shown in Figure \ref{fig:bar_states}.
Then, $\bar u_{ij}^A$ is the intermediate state in the Riemann solution; see for instance \cite{leveque2002finite}.
These states satisfy \cite{guermond2016invariant, kuzmin2020monolithic}
\begin{align*}
  \min\{u_i,u_j\}\leq \bar u_{ij}^A(u) \leq \max\{u_i,u_j\}.
\end{align*}
We also define the arithmetic average states:
\begin{align}\label{bar_state_D}
  \min\{u_i,u_j\}\leq \bar u_{ij}^D(u) := \frac{u_j+u_i}{2} \leq \max\{u_i,u_j\}.
\end{align}

\begin{figure}[!h]
  \begin{center}
    \includegraphics[scale=0.5]{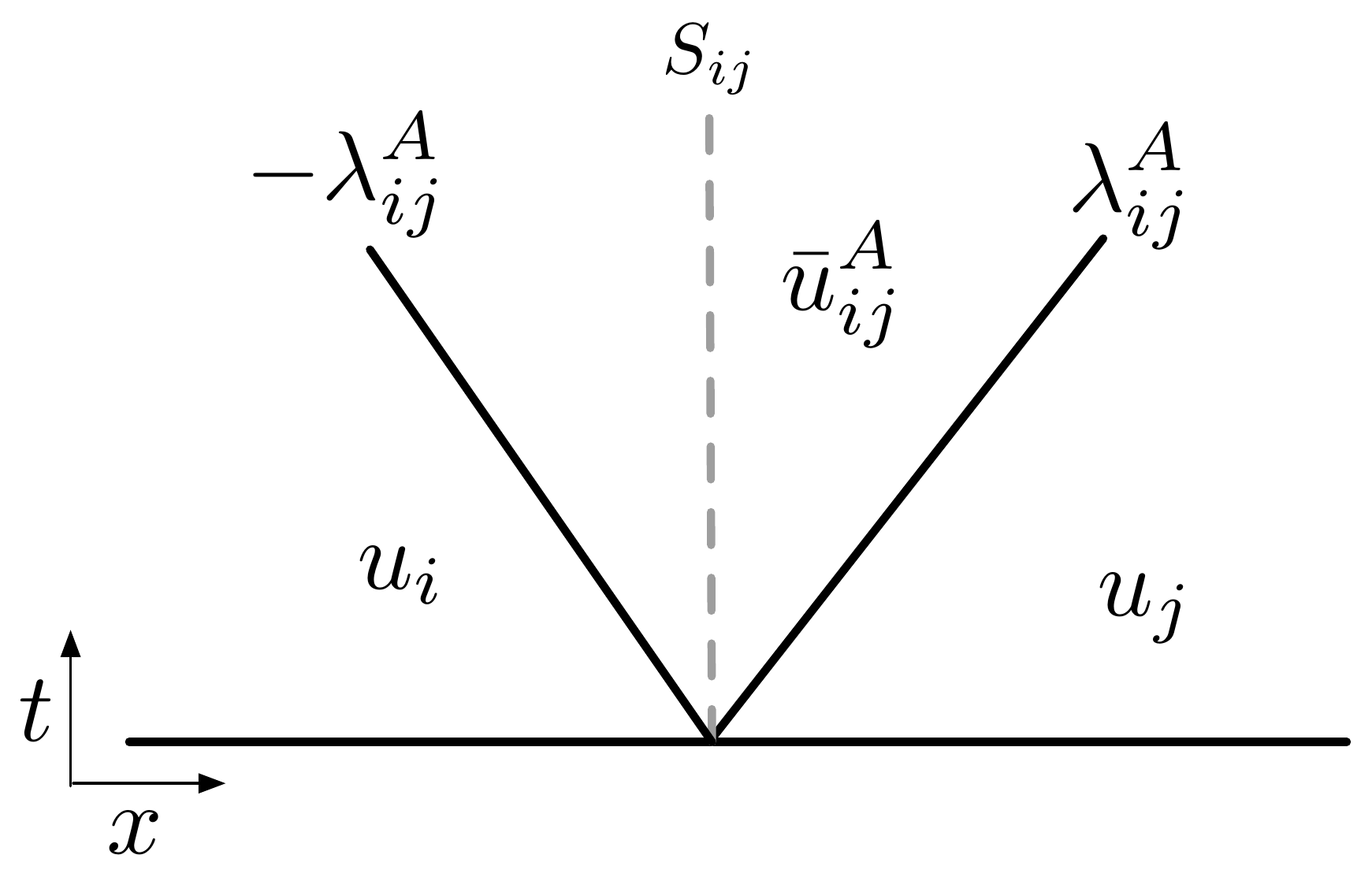}
    \caption{Structure of the Riemann solution using the local Lax-Friedrichs flux. 
      The left and right states are $u_i$ and $u_j$, the middle state is $\bar u_{ij}^A$.
      These states are separated by discontinuities traveling at speeds $\pm \lambda_{ij}^A$.
      \label{fig:bar_states}}
  \end{center}
\end{figure}

Using \eqref{bar_state_A}, \eqref{bar_state_D} and the fact that
$\sum_{\jinN}|S_{ij}|\bfn_{ij}\cdot \bff(u_i)=0$, we write \eqref{low_order_semi1} as follows:
\begin{align*}
  |K_i|\frac{\dd u_i}{\dd t}
  &= \sum_{\jinN} |S_{ij}| \lambda_{ij}^A
  \left[
    \frac{u_j-u_i}{2}
    -\bfn_{ij}\cdot \frac{\bff(u_j)-\bff(u_i)}{2\lambda_{ij}^A}
    +\frac{2c_{ij}}{\lambda_{ij}^A|\bfx_j-\bfx_i|}\frac{u_{j}-u_i}{2}
    \right] \\
  &= \sum_{\jinN} |S_{ij}| \lambda_{ij}^A
  \left[
    \frac{u_j+u_i}{2} - u_i
    -\bfn_{ij}\cdot \frac{\bff(u_j)-\bff(u_i)}{2\lambda_{ij}^A}
    +\frac{2c_{ij}}{\lambda_{ij}^A|\bfx_j-\bfx_i|}\left(\frac{u_{j}+u_i}{2}-u_i\right)
    \right] \\
  &= \sum_{\jinN} |S_{ij}| \lambda_{ij}^A
  \left[
    \bar u_{ij}^A - u_i
    +\frac{2c_{ij}}{\lambda_{ij}^A|\bfx_j-\bfx_i|}\left(\bar u_{ij}^D-u_i\right)
    \right] \\
  &= \sum_{\jinN}
  \left[
  |S_{ij}| \lambda_{ij}^A
  \left(
  \bar u_{ij}^A
  +\frac{2c_{ij}}{\lambda_{ij}^A|\bfx_j-\bfx_i|} \bar u_{ij}^D
  \right)
  -
  |S_{ij}| \lambda_{ij}^A
  \left(
  1
  +\frac{2c_{ij}}{\lambda_{ij}^A|\bfx_j-\bfx_i|}
  \right) u_i
  \right].
\end{align*}
We now introduce the following quantities:
\begin{align}\label{dbar_state}
  \lambda_{ij}   := \lambda^A_{ij}\left( 1+ \frac{2c_{ij}}{\lambda^A_{ij}|\bfx_j-\bfx_i|}\right), 
  \qquad
  \bar u_{ij}(u) := \frac{1}{1+\frac{2 c_{ij}}{\lambda^A_{ij}|\bfx_j-\bfx_i|}}
  \left(\uA_{ij}(u)+\frac{2 c_{ij}}{\lambda^A_{ij}|\bfx_j-\bfx_i|}\uD_{ij}(u)\right). 
\end{align}
Importantly, the states $\bar u_{ij}(u)$ are also bound preserving:
\begin{align*}
  \min\{u_i,u_j\} \leq \bar u_{ij} \leq \max\{u_i,u_j\}.
\end{align*}

Finally, we can write the low-order spatial semi-discretization \eqref{low_order_semi1} as follows:
\begin{align}\label{low_order_semi2}
  |K_i|\frac{\dd u_i}{\dd t} = \sum_{\jinN} |S_{ij}| \lambda_{ij}\left(\bar u_{ij}(u)-u_i \right).
\end{align}
Since $|K_i|, |S_{ij}|>0$, $\lambda_{ij}\geq 0$ and $\bar u_{ij}(u)\in[u_i,u_j]$, the low-order
spatial semi-discretization \eqref{low_order_semi2} is local extremum diminishing (LED)
\cite{jameson1993computational}.
Namely, if $u_i$ is a local maximum, $\dd u_i/\dd t \leq 0$ so $u_i$ can't increase. 
Similarly, if $u_i$ is a local minimum, $\dd u_i/\dd t \geq 0$ so $u_i$ can't decrease.
This leads to a semi-discretization that is MPP.  
By using the implicit Euler method, we achieve
the MPP property in time also, with no restriction on the step size; 
see for example \cite{horvath1998positivity,magiera2020constraint}. 
The full low-order scheme is thus
\begin{align} \label{low-order-scheme_bar_states}
  u_i^{L,n+1}=u_i^{L,n} + \frac{\Delta t}{|K_i|} \sum_{\jinN} |S_{ij}| \lambda_{ij}
  \left(\bar u_{ij}(u^{L,n+1})-u_i^{L,n+1} \right),
\end{align}
where we use the superscript $L$ to refer to the low-order solution based on the backward Euler scheme 
with Lax-Friedrichs numerical fluxes. 
We can write \eqref{low-order-scheme_bar_states} in terms of the low-order fluxes 
\begin{align}\label{low-order-fluxes}
  G_{ij}^L := F^L_{ij}\left(u^{L,n+1}\right) - P^L_{ij}\left(u^{L,n+1}\right),
\end{align}
as follows:
\begin{align} \label{low-order-scheme_fluxes}
  u^{L,n+1}_i & = u^{L,n}_i - \frac{\Delta t}{|K_i|} \sum_{\jinN} |S_{ij}| G^L_{ij}.
\end{align}

\begin{rmk}[Time step restriction with Forward Euler]
If we discretize \eqref{low_order_semi2} in time using the forward Euler method, we obtain
\begin{align*}
  u_i^{L,n+1} = \left(1-\frac{\Delta t}{|K_i|}\sum_{\jinN}|S_{ij}| \lambda_{ij} \right)u_i^{L,n} 
  + \frac{\Delta t}{|K_i|}\sum_{\jinN}|S_{ij}|\lambda_{ij} \bar u_{ij}(u^{L,n}).
\end{align*}
The solution is MPP provided 
\begin{align*}
  \left(1-\frac{\Delta t}{|K_i|}\sum_{\jinN} |S_{ij}|\lambda_{ij} \right)\geq 0
  \quad \implies \quad
  \Delta t \leq \frac{|K_i|}{\sum_{\jinN}|S_{ij}|\lambda_{ij}} \sim \frac{|K_i||\bfx_j-\bfx_i|}{|S_{ij}|} = \mathcal{O}(h^2),
\end{align*}
where $h$ is the mesh size.
\end{rmk}

% *************************************** %
% ********** HIGH ORDER SCHEME ********** %
% *************************************** %
\section{High-order scheme}\label{sec:high_order}
In scheme \eqref{limited-flux-scheme}, 
$G_{ij}^H$ are fluxes that improve the accuracy in space and time of the low-order fluxes $G_{ij}^L$. 
In this section, we define $G_{ij}^H$. 
Let us first define high-order convective and diffusive fluxes, which we denote by 
$F_{ij}^H(u)\approx \bff(u)\cdot\bfn_{ij}$ and $P_{ij}^H(u)\approx (c(u,x)\nabla u)\cdot\bfn_{ij}$, respectively.

For the high-order convective fluxes, we apply \eqref{conv-flux_low_order},
after replacing the cell averages by high-order pointwise reconstructed values.  
Let $p_i(u,\bfx)$ denote a high-order approximation of $u(\bfx)$ in cell $K_i$, 
based on weighted essentially non-oscillatory (WENO) reconstruction \cite{liu1994weighted, jiang1996efficient}.  
Then we set
\begin{align}\label{conv-flux_high_order}
  F^H_{ij}(u,\bfx)=
    \bfn_{ij}\cdot \frac{\bff(p_j(u,\bfx))+\bff(p_i(u,\bfx))}{2} 
    -\frac{1}{2}\lambda^A_{ij}(p_j(u,\bfx)-p_i(u,\bfx)).
\end{align}
For a uniform and structured mesh, the high-order diffusive fluxes can be given by 
\begin{align}\label{diff-flux_high_order}
  P_{ij}^H(u,\bfx) = 
    \frac{c(p_i(u,\bfx),\bfx)\nabla p_i(u,\bfx)+c(p_j(u,\bfx),\bfx)\nabla p_j(u,\bfx)}{2}\cdot\bfn_{ij}.
\end{align}
In principle, these fluxes should be integrated over each face $S_{ij}$, but the reconstruction
required for this quadrature is very expensive.  An economical alternative, which we use in Section \ref{sec:num},
is to approximate the spatial integrand by the value at the midpoint of the face; this approach often
reaps most of the benefits of the high-order WENO reconstruction at a reduced cost \cite{qiu2002construction}.
We correspondingly replace $\bfx$ by $\bfx_{ij}$ in \eqref{conv-flux_high_order} and \eqref{diff-flux_high_order},
where $\bfx_{ij}$ denotes the midpoint of face $S_{ij}$. 

To integrate in time, we use high-order $M$-stage diagonally implicit Runge-Kutta (DIRK) methods.
Let $b_m$, $c_m$, and $a_{ms}$ (with $m,s=1,\dots, M$) denote the Butcher coefficients of the DIRK method. 
The intermediate RK approximations $y_i^{(m)}\approx u_i(t^n+c_m\Delta t)$ to the cell averages are given by
\begin{align}\label{interm_RK_soln}
  y_i^{(m)} = u_i^n- \frac{\Delta t}{|K_i|}\sum_{\jinN}|S_{ij}| \sum_{s=1}^m a_{ms}
  %\Big[\underbrace{F^H_{ij}(y^{(s)},\bfx_{ij}) - P^H_{ij}(y^{(s)},\bfx_{ij})}_
  %  {\textstyle =:G_{ij}^H\left(y^{(s)},\bfx_{ij}\right)}\Big], 
  \left[F^H_{ij}(y^{(s)},\bfx_{ij}) - P^H_{ij}(y^{(s)},\bfx_{ij})\right], 
  \qquad m=1,\dots, M.
\end{align}
The RK update is given by 
\begin{align} \label{high-order-scheme}
  u^{H,n+1}_i & = u^{H,n}_i - \frac{\Delta t}{|K_i|} \sum_{\jinN}|S_{ij}| G_{ij}^{H},
\end{align}
where 
\begin{align} \label{high-order-fluxes}
  G_{ij}^{H} = \sum_{m=1}^M b_m\left[F^H_{ij}\left(y^{(m)},\bfx_{ij}\right)-P^H_{ij}\left(y^{(m)},\bfx_{ij}\right)\right]
\end{align}
are the high-order fluxes. 
Here we use the superscript $H$ to refer to the high-order solution based on DIRK schemes with WENO reconstruction. 
We use the high-order flux $G_{ij}^{H}$ in scheme \eqref{limited-flux-scheme}. 

The high-order solution $u_i^H$ is not MPP due to violations introduced 
by the discretizations in space and time. Using scheme \eqref{limited-flux-scheme}, 
with the flux limiters that we introduce in the next two sections, guarantees the RK 
update is MPP. 
However, the intermediate solutions $y_i^{(m)}$ might still violate the maximum principle. 
For some applications, preservation of the maximum principle is also needed for the intermediate solutions;
we discuss ways to impose this in Section \ref{sec:int_stages}.

\begin{rmk}[Conservation of mass]
  From \eqref{conv-flux_high_order} and \eqref{diff-flux_high_order}, 
  $F_{ij}^H=-F_{ji}^H$ and $P_{ij}^H=-P_{ji}^H$; hence, 
  $G_{ij}^H=-G_{ji}^H \implies \sum_i |K_i|u_i^{H,n}=\sum_i |K_i|u_i^{H,0}$. Therefore, the scheme 
  \eqref{high-order-scheme} is mass conservative. 
\end{rmk}

% ************************************************************** %
% ********** FLUX CORRECTED TRANSPORT (FCT) LIMITIING ********** % 
% ************************************************************** %
\section{Flux corrected transport (FCT) limiting} \label{sec:fct}
Consider the scheme \eqref{limited-flux-scheme} with $G_{ij}^L$ and $G_{ij}^H$ given by 
\eqref{low-order-fluxes} and \eqref{high-order-fluxes}, respectively. 
Because these fluxes are constant on each face, we can write \eqref{limited-flux-scheme}
as
\begin{align}\label{lfs-cf}
  u_i^{n+1} 
  = u_i^n - \frac{\Delta t}{|K_i|}\sum_{\jinN} |S_{ij}|\left[G_{ij}^L - \alpha_{ij}(G_{ij}^L-G_{ij}^H)\right] 
  = u_i^{L,n+1} + \frac{\Delta t}{|K_i|} \sum_{\jinN} |S_{ij}|\alpha_{ij}(G_{ij}^L-G_{ij}^H).
\end{align}
In this section, we use the FCT method of \cite{boris1973flux,zalesak1979fully}
to determine the flux limiters $\alpha_{ij}$.
Although both the low- and high-order methods are implicit, and hence require
solving algebraic systems, the limiters are computed explicitly, as described below. 
The flux-limited update inherits the MPP properties of the 
low-order solution; that is, the solution is MPP with no time step restriction. 

In the rest of this section, we follow \cite{kuzmin2012flux}. 
We will determine flux limiters $\alpha_{ij}\in[0,1]$ that guarantee
\begin{align}\label{fct_bounds}
  \frac{|K_i|}{\Delta t}\left(u^{\min}-u_i^{L,n+1}\right) =:Q_i^-
  \leq
  \sum_{\jinN}|S_{ij}|\alpha_{ij}(G_{ij}^L-G_{ij}^H)
  \leq 
  Q_i^+:=\frac{|K_i|}{\Delta t}\left(u^{\max}-u_i^{L,n+1}\right).
\end{align}
Using condition \eqref{fct_bounds}, this guarantees
$u^{\min}\leq u_i^{n+1}\leq u^{\max}$.
The limiters are computed as follows: 
\begin{enumerate}\label{zalesak_limiters}
\item Calculate the sum of positive and negative flux corrections:
\begin{subequations}\label{limiters}
\begin{align}
  P_i^+ = \sum_{\jinN}|S_{ij}|\max\left\{0,G_{ij}^L-G_{ij}^H\right\},
  \qquad
  P_i^- = \sum_{\jinN}|S_{ij}|\min\left\{0,G_{ij}^L-G_{ij}^H\right\}.
\end{align}
\item Use the sums $P_i^{\pm}$ and the bounds $Q_i^{\pm}$, given by \eqref{fct_bounds}, to compute 
\begin{align}
  R_i^+ = \min\left\{1,\frac{Q_i^+}{P_i^+}\right\}, 
  \qquad
  R_i^- = \min\left\{1,\frac{Q_i^-}{P_i^-}\right\}.
\end{align}
\item Define the limiters by
\begin{align}
  \alpha_{ij} = 
  \begin{cases}
    \min\{R_i^+, R_j^-\}, &\mbox{ if } G_{ij}^L-G_{ij}^H \geq 0, \\
    \min\{R_i^-, R_j^+\}, &\mbox{ otherwise }.
  \end{cases}
\end{align}
\end{subequations}
\end{enumerate}
Clearly, $R_i^{\pm}\in[0,1]\implies \alpha_{ij}\in[0,1]$. 
The satisfaction of \eqref{fct_bounds} is proven as follows: 
\begin{align*}
  \sum_{\jinN}|S_{ij}|\alpha_{ij}(G_{ij}^L-G_{ij}^H) \leq
  \sum_{\jinN}|S_{ij}|\alpha_{ij}\max\{0,G_{ij}^L-G_{ij}^H\} \leq
  R_i^+ \sum_{\jinN} |S_{ij}| \max\{0, G_{ij}^L-G_{ij}^H\} \leq Q_i^+,
\end{align*}
and similarly for the lower bound $Q_i^-$. 
Since $G_{ij}^{L/H}=-G_{ji}^{L/H}$ and $\alpha_{ij}=\alpha_{ji}$,  we have
\begin{align*}
  \sum_i \sum_{\jinN} |S_{ij}|\alpha_{ij}(G_{ij}^L-G_{ij}^H)=0
  \implies
  \sum_{\jinN}|K_i|u_i^{n+1}=\sum_{\jinN}|K_i|u_i^{L,n+1},
\end{align*}
which, by conservation of $u_i^{L,n+1}$, implies the scheme \eqref{lfs-cf} is mass conservative. 

\begin{rmk}[Iterative FCT]
  In some of the numerical experiments from Section \ref{sec:num}, we use the iterative FCT method 
  to recover the high-order accuracy from the baseline scheme. 
  The basic idea behind iterative FCT is to consider the quantity
  $(1-\alpha_{ij})(G_{ij}^L-G_{ij}^H)$, which is the flux excluded by the limiters,
  and perform an extra limiting step given by
  \begin{align*}
    u_i^{n+1} = 
    u_i^{\FCT,n+1} + \frac{\Delta t}{|K_i|}\sum_{\jinN}|S_{ij}|\alpha_{ij}^{(2)}(1-\alpha_{ij})(G_{ij}^L-G_{ij}^H),
  \end{align*}
  where $u_i^{\FCT,n+1}$ is given by \eqref{lfs-cf} and the superscript $(2)$ refers to the second FCT step. 
  This process can be repeated multiple times. 
  We refer the reader to \cite{kuzmin2012flux} and references therein for more details.
\end{rmk}

% ******************************************************* %
% ********** GLOBAL MONOLITHIC CONVEX LIMITING ********** %
% ******************************************************* %
\section{Global monolithic convex (GMC) limiting} \label{sec:gmc}
In this section we use a different technique to determine limiters
$\alpha_{ij}$ in \eqref{lfs-cf} that will guarantee the MPP property \eqref{global_bounds}.
Namely, we follow
the global monolithic convex (GMC) limiting approach from \cite{exGMCL}.
As in the previous section, $G_{ij}^L$ and $G_{ij}^H$ are the low- and
high-order fluxes given by \eqref{low-order-fluxes} and
\eqref{high-order-fluxes}, respectively. 
Before defining the limiters $\alpha_{ij}$, we need to rewrite scheme \eqref{lfs-cf}
in a form like that given in \cite[Section 3.2]{exGMCL}. Let us define the following quantities:
\begin{align*}
  \lambdaS_i := \sum_{\jinN}|S_{ij}|\lambda_{ij}, 
  \qquad
  \bar u_i(u) := \frac{1}{\lambdaS_i}\sum_{\jinN}|S_{ij}|\lambda_{ij}\bar u_{ij}(u).
\end{align*}
In terms of these, the low-order scheme \eqref{low-order-scheme_bar_states} can be written as
\begin{align}\label{low_gmc}
  u_i^{L,n+1} = u_i^{L,n} + \frac{\Delta t}{|K_i|}\sum_{\jinN}|S_{ij}|\lambda_{ij}\left(\bar u_{ij}(u^{L,n+1})-u_i^{L,n+1}\right)
  = u_i^{L,n} + \frac{\Delta t}{|K_i|}\lambdaS_i\left(\bar u_i(u^{L,n+1})-u_i^{L,n+1}\right).
\end{align}

Let us consider \eqref{limited-flux-scheme} where the high-order flux $G_{ij}^H$ is 
computed via \eqref{high-order-fluxes} at the beginning of the time step. 
The low-order flux $G_{ij}^L$ is given by \eqref{low-order-fluxes} and is treated implicitly; i.e., 
$G_{ij}^L=G_{ij}^L(u^{n+1})$. Using \eqref{low_gmc}, we rewrite scheme 
\eqref{limited-flux-scheme} as follows:
\begin{align*}
  u_i^{n+1} 
  %&= u_i^n - \frac{\Delta t}{|K_i|}\sum_{\jinN} |S_{ij}|\left[G_{ij}^L - \alpha_{ij}(G_{ij}^L-G_{ij}^H)\right] 
  %= u_i^L + \frac{\Delta t}{|K_i|} \sum_{\jinN} |S_{ij}|\alpha_{ij}(G_{ij}^L-G_{ij}^H)\\
  = u_i^n + \frac{\Delta t}{|K_i|}\lambdaS_i \left[\bar u_i^*(u^{n+1})-u_i^{n+1}\right],
\end{align*}
where
\begin{align}\label{ustar}
  \bar u_i^*(u^{n+1})
  = \bar u_i(u^{n+1})+ \frac{1}{\lambdaS_i} \sum_{\jinN} |S_{ij}|\alpha_{ij}(G_{ij}^L(u^{n+1})-G_{ij}^H).
\end{align}
We define 
\begin{align}\label{g_for_gmc}
  g_i(u^{n+1}) := u_i^{n+1}+\frac{\bar u_i^*(u^{n+1})-u_i^{n+1}}{1+\gamma},
\end{align}
where $\gamma\geq 0$ is a constant that can be adjusted to improve accuracy; see \cite{exGMCL}. 
Finally, scheme \eqref{limited-flux-scheme} becomes
\begin{align}\label{gmc}
  u_i^{n+1} 
  = u_i^n + \frac{\Delta t}{|K_i|}\lambdaS_i(1+\gamma) \left[g_i(u^{n+1})-u_i^{n+1}\right].
\end{align}
The MPP properties of \eqref{gmc} are guaranteed by the following theorem. 
\begin{thm}(Maximum principle)
  Let 
  \begin{align}\label{gmc_pmQ}
    Q_i^-:=\lambdaS_i \left[(u^{\min}-\bar u_i(u^{n+1}))+\gamma(u^{\min}-u_i^{n+1})\right], \qquad
    Q_i^+:=\lambdaS_i \left[(u^{\max}-\bar u_i(u^{n+1}))+\gamma(u^{\max}-u_i^{n+1})\right].
  \end{align}
  Assume $u_i^n\in[u^{\min},u^{\max}]$ and that $\alpha_{ij}$'s are chosen to satisfy
  \begin{align}\label{gmc_bounds}
    Q_i^- \leq \sum_{\jinN}|S_{ij}|\alpha_{ij}(G_{ij}^L-G_{ij}^H) \leq Q_i^+.
  \end{align}
  Then $u_i^{n+1}$ given by \eqref{gmc} satisfies $u_i^{n+1}\in[u^{\min}, u^{\max}]$ with no time step restriction.
\end{thm}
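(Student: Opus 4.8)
The plan is to observe that \eqref{gmc} writes $u_i^{n+1}$ as a convex combination of $u_i^n$ and the auxiliary quantity $g_i(u^{n+1})$, and to show separately that $g_i(u^{n+1})\in[u^{\min},u^{\max}]$. Since $u_i^n$ is assumed to lie in the same interval, the conclusion is then immediate. The only substantive step is the bound on $g_i$, which turns out to be a one-line algebraic consequence of the hypothesis \eqref{gmc_bounds}.

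First I would bound $g_i(u^{n+1})$. Combining \eqref{g_for_gmc} and \eqref{ustar},
\[
(1+\gamma)\left(g_i(u^{n+1})-u_i^{n+1}\right)=\bar u_i^*(u^{n+1})-u_i^{n+1}
=\left(\bar u_i(u^{n+1})-u_i^{n+1}\right)+\frac{1}{\lambdaS_i}\sum_{\jinN}|S_{ij}|\alpha_{ij}(G_{ij}^L-G_{ij}^H).
\]
Inserting the upper bound from \eqref{gmc_bounds} together with the definition of $Q_i^+$ in \eqref{gmc_pmQ}, namely $\tfrac{1}{\lambdaS_i}\sum_{\jinN}|S_{ij}|\alpha_{ij}(G_{ij}^L-G_{ij}^H)\le (u^{\max}-\bar u_i(u^{n+1}))+\gamma(u^{\max}-u_i^{n+1})$, the $\bar u_i(u^{n+1})$ terms cancel and we are left with $(1+\gamma)(g_i(u^{n+1})-u_i^{n+1})\le (1+\gamma)(u^{\max}-u_i^{n+1})$, i.e. $g_i(u^{n+1})\le u^{\max}$. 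The mirror computation with $Q_i^-$ gives $g_i(u^{n+1})\ge u^{\min}$. Note that this step uses only the assumed flux bounds; in particular it does not require $\bar u_i(u^{n+1})\in[u^{\min},u^{\max}]$.

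Next I would rearrange \eqref{gmc}. Setting $\beta_i:=\frac{\Delta t}{|K_i|}\lambdaS_i(1+\gamma)$, we have $\beta_i\ge 0$ because $\Delta t>0$, $|K_i|>0$, $\lambdaS_i=\sum_{\jinN}|S_{ij}|\lambda_{ij}\ge 0$, and $\gamma\ge 0$. Solving \eqref{gmc} for $u_i^{n+1}$ yields $u_i^{n+1}=\frac{1}{1+\beta_i}\,u_i^n+\frac{\beta_i}{1+\beta_i}\,g_i(u^{n+1})$, a convex combination (nonnegative weights summing to one) of two quantities both lying in $[u^{\min},u^{\max}]$; hence $u_i^{n+1}\in[u^{\min},u^{\max}]$.

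I do not expect a genuine obstacle: the proof is essentially a two-line manipulation. The point worth emphasizing is where "no time step restriction" enters — the weights $1/(1+\beta_i)$ and $\beta_i/(1+\beta_i)$ are nonnegative for \emph{every} $\beta_i\ge 0$ regardless of how large $\Delta t$ is, which is exactly the benefit of treating the low-order part implicitly (backward Euler). The one logical subtlety to flag is that the statement is conditional: it presupposes that a solution $u^{n+1}$ of the implicit system \eqref{gmc} exists and that the limiters $\alpha_{ij}$ — which may themselves depend on $u^{n+1}$ through $\bar u_i$ and $Q_i^\pm$ — have been chosen so that \eqref{gmc_bounds} holds; granting this, the bound on $u_i^{n+1}$ follows as above (and the degenerate case $\lambdaS_i=0$, in which \eqref{gmc} gives $u_i^{n+1}=u_i^n$ directly, is trivial).
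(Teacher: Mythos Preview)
Your proof is correct and matches the paper's argument essentially line for line: both first show $g_i(u^{n+1})\in[u^{\min},u^{\max}]$ via \eqref{ustar}--\eqref{gmc_pmQ}, then use \eqref{gmc} together with $u_i^n\in[u^{\min},u^{\max}]$ to conclude. The only cosmetic difference is that you explicitly rewrite \eqref{gmc} as the convex combination $u_i^{n+1}=\tfrac{1}{1+\beta_i}u_i^n+\tfrac{\beta_i}{1+\beta_i}g_i$, whereas the paper bounds each side directly from the unrearranged form; these are the same manipulation.
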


\begin{proof}
  Considering definition \eqref{ustar} and the bounds \eqref{gmc_bounds}, we get
  \begin{align*}
    \bar u_i^*-u_i^{n+1}\leq \bar u_i + \frac{Q_i^+}{\lambdaS_i} - u_i^{n+1}
    = (1+\gamma)(u^{\max}-u_i^{n+1}).
  \end{align*}
  Using this upper bound within definition \eqref{g_for_gmc}, we get $g_i(u^{n+1}) \leq u^{\max}$.
  Since $u_i^n, ~g_i(u^{n+1})\leq u^{\max}$ and using \eqref{gmc}, we get
  \begin{align*}
    u_i^{n+1} 
    = u_i^n + \frac{\Delta t}{|K_i|}\lambdaS_i(1+\gamma) \left[g_i(u^{n+1})-u_i^{n+1}\right]
    \leq 
    u_i^{\max} + \frac{\Delta t}{|K_i|}\lambdaS_i(1+\gamma) \left[u^{\max}-u_i^{n+1}\right]
    \implies u_i^{n+1} \leq u_i^{\max}.
\end{align*}
  The lower bound is proven similarly. 
\end{proof}

We must choose the limiters $\alpha_{ij}$ to satisfy \eqref{gmc_bounds}. 
We do that via algorithm \eqref{limiters} with $Q_i^{\pm}$ given by \eqref{gmc_pmQ}. 
To prove conservation of mass by \eqref{gmc}, consider
\begin{align*}
  \sum_i \lambdaS_i (1+\gamma)\left[g_i(u^{n+1})-u_i^{n+1}\right]
  &=
  \sum_i \lambdaS_i \left[\bar u_i^* -u_i^{n+1}\right]=
  \sum_i \lambdaS_i \left[\bar u_i -u_i^{n+1}\right]
  +\underbrace{\sum_i\sum_{\jinN}|S_{ij}|\alpha_{ij}(G_{ij}^L-G_{ij}^H)}_{\textstyle=0}
  \\
  &=\sum_i \sum_{\jinN}|S_{ij}|\lambda_{ij}\left(\bar u_{ij}-u_i^{n+1}\right)
  =-\sum_i\sum_{\jinN}|S_{ij}|G_{ij}^L=0.
\end{align*}
Therefore, $\sum_i |K_i|u_i^{n+1} = \sum_i |K_i| u_i^n \implies \sum_i |K_i|u_i^{n+1} = \sum_i |K_i| u_i^0$. 

Due to the highly nonlinear nature of \eqref{ustar}, using Newton's method 
to solve \eqref{gmc} with an exact Jacobian is difficult. 
Instead, we have found the following fixed point iteration to be useful:
\begin{align}\label{gmc_iter}
  u_i^{(k+1)} = u_i^n + \frac{\Delta t}{|K_i|}\lambdaS_i(1+\gamma)\left[g_i\left(u^{(k)}\right)-u_i^{(k+1)}\right]
  \implies
  u_i^{(k+1)} = \frac{1}{1+\frac{\Delta t}{|K_i|}\lambdaS_i(1+\gamma)}\left[u_i^n + \frac{\Delta t}{|K_i|}\lambdaS_i(1+\gamma) g_i\left(u^{(k)}\right)\right],
\end{align}
with $u^{(0)}=u^n$. For each time step, we run this iterative algorithm until  
\begin{align*}  
  \left|\left| 
  u_i^{(k+1)} - u_i^n - \frac{\Delta t}{|K_i|}\lambdaS_i(1+\gamma)\left[g_i\left(u^{(k+1)}\right)-u_i^{(k+1)}\right] 
  \right|\right|_{\ell^2} 
  \leq \text{tol}^{\GMC} = 10^{-12}.
\end{align*}

%\begin{rmk}[Implicitness of the low- and high-order fluxes]
%  In principle, both $G_{ij}^L$ and $G_{ij}^H$ should be treated implicitly. 
%  However, for computational efficiency, at every time step we pre-compute $G^H_{ij}$,
%  as explained in Section \ref{sec:high_order}, and proceed with the iterative algorithm 
%  \eqref{gmc_iter} keeping $G_{ij}^H$ fixed.
%\end{rmk}

% *************************************************************** %
% ********** BOUND PRESERVATON FOR INTERMEDIATE STAGES ********** 
% *************************************************************** %
\section{Maximum principle preservation for intermediate stages}\label{sec:int_stages}
The procedures outlined in Sections \ref{sec:fct} and \ref{sec:gmc} guarantee preservation of the maximum principle
for the new solution $u^{n+1}$, but not necessarily for the intermediate stages
$y^m$.  For some applications, it may be important to guarantee the maximum principle
for the intermediate stages, particularly if the
system is not defined for values outside certain bounds.
We consider two possible approaches:

\begin{enumerate}
\item{\it Diagonally implicit Runge-Kutta methods}

Apply the limiting procedure to each stage value of a given DIRK scheme. 
%This is potentially expensive and can (at least formally) reduce the order of accuracy
This can (at least formally) reduce the order of accuracy of the time integration scheme. 
In \cite{exGMCL}, this approach was used with explicit Runge-Kutta methods. 
The authors did not observe loss of accuracy in their numerical experiments.

\item{\it Methods with SSP stages}

Start with a spatial semi-discretization that is MPP and
let $\Delta t^{\rm FE}$ denote the time step under which it is MPP when discretized by the forward Euler method.
Given a RK method, let $A$ denote the $M\times M$ matrix of the Butcher coefficients $a_{ms}$,
choose $\mu>0$, and set
$$
X(\mu)=\left(I+\mu A\right)^{-1}.
$$
Let $e$ denote the column vector of length $M$ with all entries equal to unity.
Then if
\begin{align} \label{SSP-stages}
  AX(\mu) \ge 0, \qquad AX(\mu)e \le e,
\end{align}
then the intermediate stages will be MPP
for $\Delta t\le \mu \Delta t^{\rm FE}$.
This approach is also considered in \cite{exGMCL}.
Note that the conditions \eqref{SSP-stages} are more relaxed than those required
for the full method to be SSP.
\end{enumerate}

\subsection{Implicit Euler extrapolation methods}
A particularly useful class of methods are those satisfying \eqref{SSP-stages}
for arbitrarily large values of $\mu$.  Such methods have stages that are unconditionally
SSP (i.e., SSP under any step size) and can be constructed using extrapolation
applied to the implicit Euler method \cite[Section IV.9]{Hairer:ODEs2}.  These methods are nearly A-stable
(specifically, they are $A(\alpha)$-stable with $\alpha$ close to 90 degrees)
and can be constructed to have any order of accuracy.  These methods are also
highly parallelizable \cite{2014_hork}.
The implicit Euler extrapolation algorithm for a method of order $p$ is given
in Algorithm \ref{alg:extrap}.  For any fixed $p$, this algorithm can be
written as a Runge-Kutta method.  
As an example, we provide coefficients of the 4th-order method below.
The coefficients are given in the standard Butcher form, although the
implementation is done more efficiently using Algorithm \ref{alg:extrap}.

\begin{align}\label{implicit_euler_method}
\begin{tabular}{c|cccccccccc}
  $1$ & $1$ &  &  &  &  &  &  &  &  & \\ [5pt]
  $\frac{1}{2}$ &  & $\frac{1}{2}$ &  &  &  &  &  &  &  & \\ [5pt]
  $1$ &  & $\frac{1}{2}$ & $\frac{1}{2}$ &  &  &  &  &  &  & \\ [5pt]
  $\frac{1}{3}$ &  &  &  & $\frac{1}{3}$ &  &  &  &  &  & \\ [5pt]
  $\frac{2}{3}$ &  &  &  & $\frac{1}{3}$ & $\frac{1}{3}$ &  &  &  &  & \\ [5pt]
  $1$ &  &  &  & $\frac{1}{3}$ & $\frac{1}{3}$ & $\frac{1}{3}$ &  &  &  & \\ [5pt]
  $\frac{1}{4}$ &  &  &  &  &  &  & $\frac{1}{4}$ &  &  & \\ [5pt]
  $\frac{1}{2}$ &  &  &  &  &  &  & $\frac{1}{4}$ & $\frac{1}{4}$ &  & \\ [5pt]
  $\frac{3}{4}$ &  &  &  &  &  &  & $\frac{1}{4}$ & $\frac{1}{4}$ & $\frac{1}{4}$ & \\ [5pt] 
  $1$ &  &  &  &  &  &  & $\frac{1}{4}$ & $\frac{1}{4}$ & $\frac{1}{4}$ & $\frac{1}{4}$ \\ [5pt] \hline \\ [-10pt]
  & $-\frac{1}{6}$ & $2$ & $2$ & $-\frac{9}{2}$ & $-\frac{9}{2}$ & $-\frac{9}{2}$ & $\frac{8}{3}$ & $\frac{8}{3}$ & $\frac{8}{3}$ & $\frac{8}{3}$ 
\end{tabular}
\end{align}

\begin{algorithm}\caption{Implicit Euler extrapolation of order $p$ ({\bf IEX$p$})}
  \label{alg:extrap}
  \begin{algorithmic}

\For{$k = 1 \to p$}  \Comment{Compute first order approximations}
    \State $Y_{k0} = u^n$
    \For{$j=1 \to k$}
        \State Solve $Y_{kj} = Y_{k,j-1} + \frac{h}{k}f(Y_{kj})$
    \EndFor
    \State $T_{k1} = Y_{kk}$
\EndFor

\For{$k=2 \to p$}  \Comment{Extrapolate to get higher order}
    \For{$j=k \to p$}
        \State $T_{jk} = T_{j,k-1} + \frac{T_{j,k-1}-T_{j-1,k-1}}{\frac{j}{j-k+1}-1}$
        \Comment{Aitken-Neville formula for extrapolation to order k}
    \EndFor
\EndFor
\State $u^{n+1} = T_{pp}$ \Comment{New solution value}
%\State $\hat{y}_{n+1} = T_{p-1,p-1}$ \Comment{Embedded method solution value}
\end{algorithmic}
\end{algorithm}

For the second approach above, we need a spatial semi-discretization that is MPP, which can be
obtained by applying the GMC limiters from Section \ref{sec:gmc} only to the spatial discretization. 
We refer to \cite[Section 2.2]{exGMCL} for details. 
In Section \ref{sec:num_linear_convection_diffusion_ext_euler}, 
we test this methodology with a linear advection-diffusion problem in one-dimension. 
We recover the full accuracy of the underlying high-order scheme. 

% ******************************************* %
% ********** NUMERICAL EXPERIMENTS ********** %
% ******************************************* %
\section{Numerical examples}\label{sec:num}
In this section, we present a series of one- and two-dimensional numerical experiments to demonstrate 
the properties of the MPP algorithms we propose. For each problem, we consider the following four methods:
\begin{itemize}
  \item LLF-BE. Low-order (local Lax-Friedrichs) spatial discretization and backward Euler time integration; 
    see Section \ref{sec:low_order} for details.
  \item WENO-SDIRK. Fifth-order WENO spatial discretization and a fifth-order SDIRK time integration, 
    whose Butcher tableau is given below; see Section \ref{sec:high_order} for details.
    This high-order scheme is used as the baseline high-order method for the following two MPP algorithms. 
  \item FCT-SDIRK. MPP algorithm presented in Section \ref{sec:fct}. 
  \item GMC-SDIRK. MPP algorithm presented in Section \ref{sec:gmc}. 
\end{itemize}
The fifth-order SDIRK method that we consider, which can be found in 
\cite[Section 7.2.2]{kennedy2016diagonally} and references therein, has the following Butcher tableau:
\begin{align}
\begin{tabular}{c|ccccc}
$\frac{4024571134387}{14474071345096}$ & $\frac{4024571134387}{14474071345096}$ & $0$ & $0$ & $0$ & $0$ \\ [5pt]
$\frac{5555633399575}{5431021154178}$  & $\frac{9365021263232}{12572342979331}$ & $\frac{4024571134387}{14474071345096}$ & $0$ & $0$ & $0$ \\ [5pt]
$\frac{5255299487392}{12852514622453}$ & $\frac{2144716224527}{9320917548702}$  & $\frac{-397905335951}{4008788611757}$  & $\frac{4024571134387}{14474071345096}$ & $0$ & $0$ \\ [5pt]
$\frac{3}{20}$ & $\frac{-291541413000}{6267936762551}$  & $\frac{226761949132}{4473940808273}$   & $\frac{-1282248297070}{9697416712681}$ & $\frac{4024571134387}{14474071345096}$ & $0$ \\ [5pt]
$\frac{10449500210709}{14474071345096}$ & $\frac{-2481679516057}{4626464057815}$ & $\frac{-197112422687}{6604378783090}$  & $\frac{3952887910906}{9713059315593}$  & $\frac{4906835613583}{8134926921134}$ & $\frac{4024571134387}{14474071345096}$ \\ [5pt] \hline \\ [-10pt]
& $\frac{-2522702558582}{12162329469185}$ & $\frac{1018267903655}{12907234417901}$ & $\frac{4542392826351}{13702606430957}$ & $\frac{5001116467727}{12224457745473}$ & $\frac{1509636094297}{3891594770934}$.
\end{tabular}
\end{align}
In addition to the previous MPP algorithms, for the first one-dimensional test that we solve, 
we consider an algorithm that guarantees the intermediate solutions of the Runge-Kutta scheme are 
MPP. We do that only for one test to demonstrate that preserving the maximum principle for the intermediate stages 
does not destroy the accuracy properties of the underlying high-order scheme. 

The spatial discretization is performed on uniform grids with $N_h$ elements.
Let $K_i$ denote the $i$-th element; then
$K_i=[x_{i-1/2},x_{i+1/2}]$ and
$K_i=\{(x,y)\in\mathbb{R} ~|~ x\in[x_{i-1/2},x_{i+1/2}], y\in[y_{i-1/2},y_{i+1/2}]\}$ 
for the one- and two-dimensional domains, respectively. The mesh spacing is denoted by 
$\Delta x$ and $\Delta y$ in the x- and y-direction, respectively. 
For the discretization in time, we use by default $\Delta t=0.5 \Delta x$. 
To quantify the magnitude of the overshoots and undershoots, we report 
\begin{align*}
  \delta
  =\min
  \left\{
  \min_{i,n} (u^n_i-u^{\min}), ~\min_{i,n} (u^{\max} - u^n_i)
  \right\}.
\end{align*}
Note that $\delta\geq 0$ for any MPP solution. In practice, however, $\delta$ might be a small 
negative number on the order of machine precision, which indicates a small violation of the maximum principle. 
In practice it might be acceptable to clip these values, 
since the methods are conservative only up to machine precision.
If the exact solution is available, we calculate and report the $L_1$ error
\begin{align*}
  E_1(t) = |K_i| \sum_{i=1}^{N_h}\left|\tilde u_i(t)-u^{\text{exact}}(x_i,y_i,t)\right|,
\end{align*}
where $\tilde u_i(t)$ is a fifth-order polynomial reconstruction of the numerical solution evaluated 
at $(x,y)=(x_i, y_i)$.
In addition, we report the corresponding experimental order of convergence (EOC). 
%Finally, for each problem, we report the number of nonlinear iterations per time step 
%using the FCT and the GMC limiters. For the FCT limiters, we need to solve $1$ and $M$
%systems of equations corresponding to the low- and high-order underlying discretizations, respectively. 
%For the GMC limiters, we need to solve $M$ systems of equations corresponding to the high-order underlying 
%scheme; additionally, we need to perform the iterative process \eqref{gmc_iter}. 
%With respect to the RK stages, we report the average of the iterations between the five stages. 
%For the nonlinear problems, we first consider the pseudo-Jacobians \eqref{jac_low_order} 
%and \eqref{jac_high_order} for the low- and high-order methods, respectively.  
%Afterward, we solve again the problems and report the number of iterations using a pseudo-Jacobian 
%based on the linearization explained in Section \ref{sec:linJac}. 

% ************************************************* %
% ********** LINEAR CONVECTION DIFFUSION ********** %
% ************************************************* %
\subsection{Linear convection-diffusion}\label{sec:num_linear_convection_diffusion}
We start with the linear problem proposed in \cite{yang2016high}. 
The problem is given by 
\begin{subequations}\label{lin_conv_diff_prob}
\begin{align}
  u_t + a u_x &= \epsilon u_{xx}, \quad x\in[0,2\pi], \\
  u(x,0) &= \sin^4(2\pi x),
\end{align}
\end{subequations}
with periodic boundary conditions.
The coefficient $\epsilon$ controls the amount of dissipation and $a$ is the speed of advection.
We take $\epsilon=\{0,0.001\}$ and $a=1$. The exact solution, 
also found in \cite{yang2016high}, is 
\begin{align*}
  u(x,t)=\frac{3}{8}-\frac{1}{2}\exp(-4\epsilon t)\cos(2(x-t))+\frac{1}{8}\exp(-16\epsilon t)\cos(4(x-t)).
\end{align*}
We solve the problem up to the final time $t=2\pi$ using $\lambda_{i+1/2}^A=1$ for all $i$. 
The global bounds are given by $u^{\min}=0$ and $u^{\max}=1$.
The results of a convergence study are summarized in Tables \ref{table:linear1} and \ref{table:linear2}. 
Note that the high-order WENO-SDIRK method produces undershoots and/or overshoots in both cases, 
which is indicated by the negative values of $\delta$. The rest of the methods (Low-BE, FCT-SDIRK and GMC-SDIRK)
produce MPP solutions. To achieve full accuracy when $\epsilon=0$, 
we require at least 2 iterations with the FCT limiters and $\gamma\geq 2$ with the GMC limiters. 
In contrast, when $\epsilon=0.001$, 
the physical dissipation reduces the action of the limiters, which leads to full accuracy 
with only one iteration when the FCT limiters are used and $\gamma=0$ when the GMC limiters are used. 
%In Figure \ref{fig:num_iter_sol_type_0}, we report the number of iterations per time step using the FCT and GMC limiters. 

\begin{table}[!h]\scriptsize
  \begin{center}
    \subfloat[Low-order in space via BE]{
      \begin{tabular}{||c|c|c|c||} \hline
        $\Delta x$ & $E_1$ & rate & $\delta$  \\ \hline
        1/25  & 2.04 &  --  & 7.17E-03 \\
        1/50  & 1.85 & 0.14 & 6.89E-04 \\
        1/100 & 1.42 & 0.38 & 4.92E-05 \\
        1/200 & 9.42E-01 & 0.59 & 3.19E-06 \\ \hline
      \end{tabular}
    } \qquad   
    \subfloat[WENO-SDIRK]{
      \begin{tabular}{||c|c|c|c||} \hline
        $\Delta x$ & $E_1$ & rate & $\delta$ \\ \hline
        1/25  & 2.73E-01 &  --  & -2.30E-02 \\ 
        1/50  & 1.98E-02 & 3.79 & -2.18E-03 \\
        1/100 & 2.20E-03 & 3.16 & -2.42E-04 \\
        1/200 & 1.25E-04 & 4.15 & -2.08E-05 \\ \hline
      \end{tabular} 
    } 

    \subfloat[FCT-SDIRK with different number of iterations]{
      \begin{tabular}{||c||c|c|c||c|c|c||} \hline
        & \multicolumn{3}{c||}{With 1 iter}
        & \multicolumn{3}{c||}{With 2 iter} \\ \hline
        $\Delta x$ & $E_1$ & rate & $\delta$ & $E_1$ & rate & $\delta$ \\ \hline
        1/25  & 2.45E-01 &  --  & 1.26E-03 & 2.39E-01 &  --  & -1.73e-18 \\
        1/50  & 2.07E-02 & 3.56 & 1.66E-04 & 1.96E-02 & 3.60 & -4.34e-19 \\
        1/100 & 2.09E-03 & 3.31 & 1.27E-05 & 2.04e-03 & 3.27 & -2.71e-20 \\
        1/200 & 1.66E-04 & 3.65 & 8.35E-07 & 1.15e-04 & 4.15 & -1.69e-21 \\ \hline
      \end{tabular}
    }
    
    \subfloat[GMC-SDIRK with different values of $\gamma$]{
      \begin{tabular}{||c||c|c|c||c|c|c||c|c|c||} \hline
        & \multicolumn{3}{c||}{$\gamma=0$}
        & \multicolumn{3}{c||}{$\gamma=1$}
        & \multicolumn{3}{c||}{$\gamma=2$} \\ \hline
        $\Delta x$ & $E_1$ & rate & $\delta$ & $E_1$ & rate & $\delta$ & $E_1$ & rate & $\delta$ \\ \hline
        1/25  & 2.58E-01 &   -- &  4.30E-04 & 2.42E-01 &  --  & 2.98E-04 & 2.41E-01 &  --  & 2.28E-04 \\
        1/50  & 2.74E-02 & 3.24 &  2.75E-05 & 2.07E-02 & 3.55 & 1.90E-05 & 1.99E-02 & 3.59 & 1.46E-05 \\
        1/100 & 3.46E-03 & 2.98 &  1.73E-06 & 2.04E-03 & 3.34 & 1.20E-06 & 2.06E-03 & 3.27 & 9.15E-07 \\
        1/200 & 4.03E-04 & 3.10 &  1.08E-07 & 1.54E-04 & 3.73 & 7.49E-08 & 1.16E-04 & 4.15 & 5.73E-08 \\ \hline
      \end{tabular}
    }
    \caption{Grid convergence study for the linear problem \eqref{lin_conv_diff_prob} with $\epsilon=0$.\label{table:linear1}}
  \end{center}
\end{table}

\begin{table}[!h]\scriptsize
  \begin{center}
    \begin{tabular}{||c||c|c|c||c|c|c||c|c|c||c|c|c||} \hline
      & \multicolumn{3}{c||}{Low-BE} 
      & \multicolumn{3}{c||}{WENO-SDIRK} 
      & \multicolumn{3}{c||}{FCT-SDIRK (1 iter)} 
      & \multicolumn{3}{c||}{GMC-SDIRK ($\gamma=0$)} \\ \hline 
      $\Delta x$ & $E_1$ & rate & $\delta$ & $E_1$ & rate & $\delta$ & $E_1$ & rate & $\delta$  & $E_1$ & rate & $\delta$  \\ \hline
      1/25  & 1.98     &  --  & 7.23e-03 & 2.49E-01 &  --  & -1.88E-02 & 2.25E-01 &  --  & 1.28E-03 & 2.36E-01 &  --  & 4.29E-04 \\
      1/50  & 1.80     & 0.14 & 7.01e-04 & 1.58E-02 & 3.98 & -9.01E-04 & 1.67E-02 & 3.76 & 1.71E-04 & 1.87E-02 & 3.66 & 2.73E-05 \\
      1/100 & 1.37     & 0.38 & 5.09e-05 & 1.25E-03 & 3.66 & -3.86E-05 & 1.27E-03 & 3.71 & 1.34E-05 & 1.28E-03 & 3.86 & 1.70E-06 \\
      1/200 & 9.07E-01 & 0.59 & 3.40e-06 & 5.46E-05 & 4.52 & -1.14E-06 & 5.48E-05 & 4.54 & 9.35E-07 & 5.48E-05 & 4.55 & 1.05E-07 \\ \hline
    \end{tabular}
    \caption{Grid convergence study for the linear problem \eqref{lin_conv_diff_prob} with $\epsilon=0.001$.\label{table:linear2}}
  \end{center}
\end{table}

%\begin{figure}[!h]
%  \begin{center}
%    \includegraphics[scale=0.5]{}
%    \qquad\qquad
%    \includegraphics[scale=0.5]{}
%  \end{center}
%    \caption{Number of iterations for the FCT and GMC limiters 
%      when solving problem \eqref{lin_conv_diff_prob} with $\epsilon=0.001$ and $N_h=200$.
%      \label{fig:num_iter_sol_type_0}}
%\end{figure}

We also conducted experiments for the pure diffusion problem, taking $a=0$.
In this case, the high-order discretization is MPP, so the limiters are not needed
(and do not turn on).

\subsubsection{Linear convection-diffusion via an implicit Euler extrapolation method}
\label{sec:num_linear_convection_diffusion_ext_euler}
Here we again solve the linear problem \eqref{lin_conv_diff_prob} with $\epsilon=0.001$,
using WENO reconstruction with GMC limiters applied only to the semi-discretization. 
The high-order time integration is given by a 4th-order 
implicit Euler extrapolation method (with Butcher tableau \eqref{implicit_euler_method}). 
Since the intermediate stages are unconditionally strong stability preserving, 
each intermediate solution is MPP. To guarantee the RK update is also MPP, 
we employ the methodology from Section \ref{sec:gmc}. 
The results of a convergence study are summarized in Table \ref{table:linear2_with_implicit_euler}. 

\begin{table}[!h]\scriptsize
  \begin{center}
    \begin{tabular}{||c||c|c|c||} \hline
      $\Delta x$ & $E_1$ & rate & $\delta$ \\ \hline
      1/25  & 2.59e-01 &  --  & 2.37e-04 \\
      1/50  & 2.09e-02 & 3.63 & 2.73e-05 \\
      1/100 & 1.41e-03 & 3.89 & 1.70e-06 \\
      1/200 & 5.81e-05 & 4.61 & 1.05e-07 \\ \hline
    \end{tabular}
    \caption{Grid convergence study for the linear problem \eqref{lin_conv_diff_prob} with $\epsilon=0.001$.
      In this case we use a method that guarantees the intermediate solutions of the RK scheme are MPP. 
      See the third approach in Section \ref{sec:int_stages} for details. 
      \label{table:linear2_with_implicit_euler}}
  \end{center}
\end{table}

% ************************************* %
% ********** VISCOUS BURGERS ********** %
% ************************************* %
\subsection{Viscous Burgers' equation}\label{sec:num_burgers}
Let us consider now the viscous Burgers' equation 
\begin{subequations}\label{burgers}
\begin{align}
  u_t+\left(\frac{u^2}{2}\right) = \epsilon u_{xx}, \quad x\in[-1,1],
\end{align}
with $\epsilon=0.01$ and periodic boundary conditions. Similarly to \cite{yang2016high}, we use the following initial condition 
\begin{align}
  u(x,0) = 
  \begin{cases}
    2, & \mbox{ if } |x|<0.5, \\
    0, & \mbox{ otherwise}. 
  \end{cases}
\end{align}
\end{subequations}
For this problem, we use $\lambda_{i+1/2}^A=\max\{u_i,u_{i+1},\hat u_i^+, \hat u_{i+1}^-\}$. 
The global bounds are given by $u^{\min}=0$ and $u^{\max}=2$.
In Figure \ref{fig:burgers}, we show the results at $t=0.25$ using the different methods and two refinements. 
The baseline high-order WENO scheme produces undershoots and/or overshoots, which are eliminated 
(up to machine precision) by all of the MPP algorithms. 
%In Figure \ref{fig:num_iter_sol_type_2}, we show the number of iterations using FCT-SDIRK and GMC-SDIRK. 
%The solid lines correspond to the case where the pseudo-Jacobian \eqref{jac_low_order} is updated at each Newton iteration. 
%The dashed lines correspond to using a pseudo-Jacobian based on the linearization explained in Section \ref{sec:linJac}. 

\begin{figure}[!ht]
  \begin{center}
  {\scriptsize
    \subfloat[$N_h=200$]{    
      \begin{tabular}{c}
        \begin{tabular}{ccc}
          \begin{tikzpicture}
            \draw[line width=1mm, black ,-] (0,0.) to (0.75,0.);
          \end{tikzpicture}
          & Low-BE:     & $\delta=-1.78 \times 10^{-15}$ \\
          \begin{tikzpicture}
            \draw[line width=1mm, blue ,-] (0,0.) to (0.75,0.);
          \end{tikzpicture}
          & WENO-SDIRK: & $\delta=-2.67 \times 10^{-9}~$ \\
          \begin{tikzpicture}
            \draw[line width=1mm, cyan ,-] (0,0.) to (0.75,0.);
          \end{tikzpicture}
          & FCT-SDIRK:   & $\delta=-6.66 \times 10^{-15}$ \\
          \begin{tikzpicture}
            \draw[line width=1mm, red, dashed] (0,0.) to (0.75,0.);
          \end{tikzpicture}
          & GMC-SDIRK:   & $\delta=-1.02 \times 10^{-14}$ 
        \end{tabular} \\
        \includegraphics[scale=0.5]{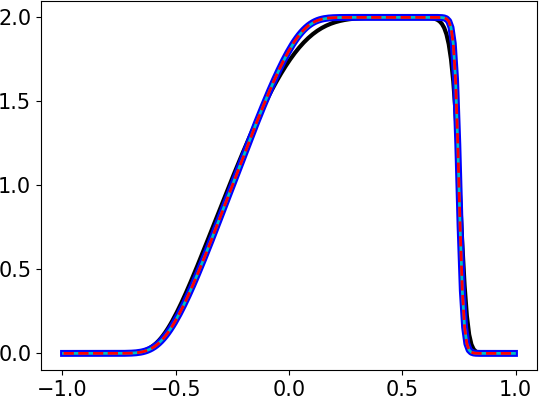}
      \end{tabular}
    }
    \qquad
    \subfloat[$N_h=400$]{
      \begin{tabular}{c}
        \begin{tabular}{ccc}
          \begin{tikzpicture}
            \draw[line width=1mm, black ,-] (0,0.) to (0.75,0.);
          \end{tikzpicture}
          & Low-BE:     & $\delta=-2.66 \times 10^{-15}$ \\
          \begin{tikzpicture}
            \draw[line width=1mm, blue ,-] (0,0.) to (0.75,0.);
          \end{tikzpicture}
          & WENO-SDIRK: & $\delta=-2.41 \times 10^{-5}~$ \\
          \begin{tikzpicture}
            \draw[line width=1mm, cyan ,-] (0,0.) to (0.75,0.);
          \end{tikzpicture}
          & FCT-SDIRK:   & $\delta=-7.99 \times 10^{-15}$ \\
          \begin{tikzpicture}
            \draw[line width=1mm, red, dashed] (0,0.) to (0.75,0.);
          \end{tikzpicture}
          & GMC-SDIRK:   & $\delta=-1.33 \times 10^{-14}$ 
        \end{tabular} \\
        \includegraphics[scale=0.5]{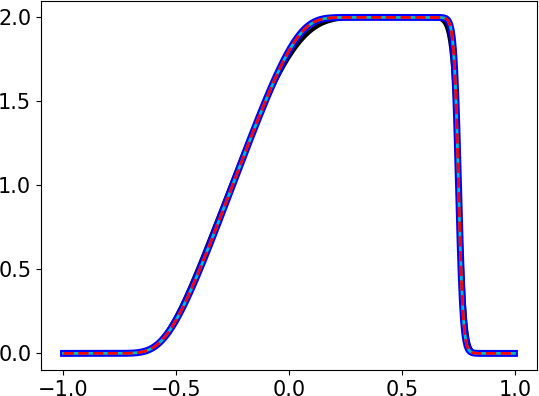}
      \end{tabular}
    }
  }
  \end{center}
  \caption{Numerical solution of the nonlinear problem \eqref{burgers}.
    Computations are
    performed using different number of cells.
    \label{fig:burgers}}
\end{figure}

%\begin{figure}[!ht]
%  \begin{center}
%    \includegraphics[scale=0.5]{}
%    \qquad\qquad
%    \includegraphics[scale=0.5]{}
%  \end{center}
%    \caption{Number of iterations for the FCT and GMC limiters 
%      when solving problem \eqref{burgers} with $N_h=200$
%      The dashed lines indicate the number of iterations (following the same color code) when 
%      the Jacobian is based on a linearized equation, as described in Section \ref{sec:linJac}. 
%      \label{fig:num_iter_sol_type_2}}
%\end{figure}

% ************************************** %
% ********** BUCKLEY-LEVERETT ********** %
% ************************************** %
\subsection{One-dimensional viscous Buckley-Leverett equation}\label{sec:num_BL}
Following \cite[Example 4.3]{yang2016high}, we consider the nonlinear problem 
\begin{subequations}\label{BL}
\begin{align}
  u_t+\bff(u)_x=\epsilon\left(c(u)u_x\right)_x, \quad x\in[0,1],
\end{align}
where $\epsilon=0.01$ and
\begin{align}
  \bff(u)=\frac{u^2}{u^2+(1-u)^2}, \qquad 
  c(u)=
  \begin{cases}
    4u(1-u), & \mbox{ if } 0\leq u\leq 1, \\
    0,       & \mbox{ otherwise }.
  \end{cases}
\end{align}
The boundary conditions are $u(0,t)=1$ and $u(1,t)=0$ and 
the initial condition is 
\begin{align}
  u(x,0) = 
  \begin{cases}
    1-3x, & \mbox{ if } 0\leq x<1/3, \\
    0, & \mbox{ otherwise}. 
  \end{cases}
\end{align}
\end{subequations}

As upper bound for the wave speed we use $\lambda_{i+1/2}^A=2$. The global bounds are given by
$u^{\min}=0$ and $u^{\max}=1$. In Figure \ref{fig:bl}, we show the solution at $t=0.2$ using 
the different methods and two refinements.
Using WENO-SDIRK, we get small undershoots and/or overshoots. 
The rest of the methods produce MPP solutions.
%Finally, we show the number of iterations for both flux limiting strategies 
%in Figure \ref{fig:num_iter_sol_type_3}.

\begin{figure}[!ht]
  \begin{center}
  {\scriptsize
    \subfloat[$N_h=200$]{    
      \begin{tabular}{c}
        \begin{tabular}{ccc}
          \begin{tikzpicture}
            \draw[line width=1mm, black ,-] (0,0.) to (0.75,0.);
          \end{tikzpicture}
          &Low-BE:     & $\delta=-1.11 \times 10^{-15}$ \\
          \begin{tikzpicture}
            \draw[line width=1mm, blue ,-] (0,0.) to (0.75,0.);
          \end{tikzpicture}
          &WENO-SDIRK: & $\delta=-1.29 \times 10^{-9}~$ \\
          \begin{tikzpicture}
            \draw[line width=1mm, cyan ,-] (0,0.) to (0.75,0.);
          \end{tikzpicture}
          &FCT-SDIRK:   & $\delta=-6.74 \times 10^{-15}$ \\
          \begin{tikzpicture}
            \draw[line width=1mm, red, dashed] (0,0.) to (0.75,0.);
          \end{tikzpicture}
          &GMC-SDIRK:   & $\delta=-2.22 \times 10^{-15}$
        \end{tabular}\\
        \includegraphics[scale=0.5]{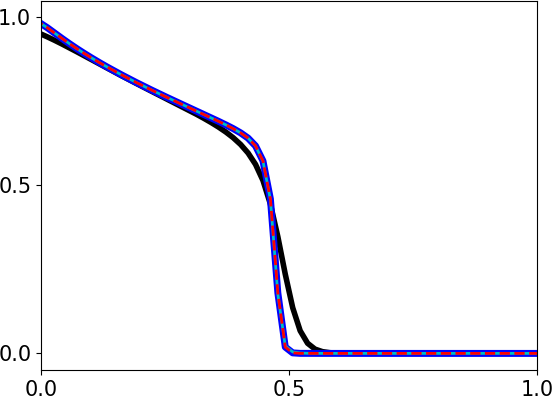}
      \end{tabular}
    }
    \qquad
    \subfloat[$N_h=400$]{
      \begin{tabular}{c}
        \begin{tabular}{ccc}
          \begin{tikzpicture}
            \draw[line width=1mm, black ,-] (0,0.) to (0.75,0.);
          \end{tikzpicture}
          & Low-BE:     & $\delta = -9.99 \times 10^{-15}$ \\
          \begin{tikzpicture}
            \draw[line width=1mm, blue ,-] (0,0.) to (0.75,0.);
          \end{tikzpicture}
          & WENO-SDIRK: & $\delta = -1.25 \times 10^{-8{\color{white}x}}$ \\
          \begin{tikzpicture}
            \draw[line width=1mm, cyan ,-] (0,0.) to (0.75,0.);
          \end{tikzpicture}
          & FCT-SDIRK:   & $\delta = -3.21 \times 10^{-14}$ \\
          \begin{tikzpicture}
            \draw[line width=1mm, red, dashed] (0,0.) to (0.75,0.);
          \end{tikzpicture}
          & GMC-SDIRK:   & $\delta = -2.70 \times 10^{-14}$ 
        \end{tabular}\\
        \includegraphics[scale=0.5]{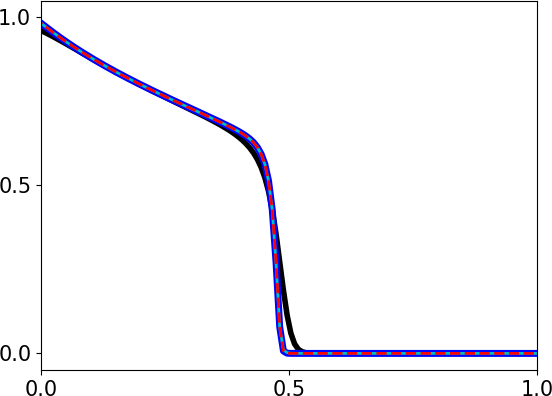}
      \end{tabular}
    }
  }
  \end{center}
  \caption{Numerical solution of the nonlinear problem \eqref{BL}.
    Computations are
    performed using different number of cells.
    \label{fig:bl}}
\end{figure}

%\begin{figure}[!ht]
%  \begin{center}
%    \includegraphics[scale=0.5]{}
%    \qquad\qquad
%    \includegraphics[scale=0.5]{}
%  \end{center}
%    \caption{Number of iterations for the FCT and GMC limiters 
%      when solving problem \eqref{BL} with $N_h=200$.
%      The dashed lines indicate the number of iterations (following the same color code) when 
%      the Jacobian is based on a linearized equation, as described in Section \ref{sec:linJac}. 
%      \label{fig:num_iter_sol_type_3}}
%\end{figure}

% ****************************************** %
% ********** STEADY-STATE PROBLEM ********** %
% ****************************************** %
\subsection{A one-dimensional steady state problem}\label{sec:steady_state}
Finally, we consider a problem with a steady state solution. Namely, we solve 
\begin{subequations}\label{steady_state_prob}
\begin{align}
  u_t+\bff(u,x)_x &= \epsilon u_{xx}, \quad x\in\mathbb{R}, \\
  \lim_{x\rightarrow \pm\infty} u(x) &= 0, 
\end{align}
\end{subequations}
with 
$\bff(u,x) = - \epsilon xu/\sigma^2$,
$\epsilon=0.01$ and $\sigma^2=0.01$.
It is easy to verify that 
\begin{align}\label{steady_state}
  u(x) = A\exp\left(-\frac{x^2}{2\sigma^2}\right) 
\end{align}
is the steady state solution of \eqref{steady_state_prob} where
the constant $A$ is determined by conservation of mass.
We take the computational domain to be $-1 \le x\le 1$ and invoke homogeneous
Dirichlet boundary conditions since $u(\pm 1)\approx 0$. 
As initial condition, we use
\begin{align*}
  u(x,0)=\sqrt{2\pi}\sigma  \sin^2(2\pi x),
\end{align*}
which leads to the steady state \eqref{steady_state} with amplitude $A=1$. 

For the flux function in this problem and with the initial condition that we consider, 
\eqref{steady_state_prob} satisfies a minimum principle.
Therefore, the MPP algorithms must guarantee $u\geq 0$. 
To guarantee positivity, we need the face states $\bar u_{ij}$, 
given by \eqref{dbar_state}, to be positive provided $u_i, u_j\geq 0$. 
From \eqref{dbar_state}, $c_{ij}=\epsilon\geq 0, ~\lambda_{ij}^A\geq 0 \implies \bar u_{ij}\geq 0$ 
provided $\uA_{ij}, ~\uD_{ij} \geq 0$. 
From \eqref{bar_state_D}, $\uD_{ij}$ is clearly non-negative if $u_i, u_j\geq 0$. 
We now find a condition on $\lambda_{ij}^A$ to guarantee $\uA_{ij}\geq 0$. 
On a one-dimensional grid, neighboring cells have $j=i+1$ or $j=i-1$,
and it is convenient to write $\uA_{i,i+1}=\uA_{i+1/2}$.
Let $v(x)=-\epsilon x/\sigma^2$. We get
\begin{align*}
  \uA_{i+1/2}
  &=\frac{u_i+u_{i+1}}{2}-\frac{\bff(u_{i+1},x_{i+1})-\bff(u_i,x_i)}{2\lambda_{i+1/2}^A} 
  =\frac{u_i+u_{i+1}}{2}-\frac{v_{i+1} u_{i+1}-v_iu_i}{2\lambda_{i+1/2}^A} \\
  &=\frac{1}{2\lambda_{i+1/2}^A}\left[(\lambda_{i+1/2}^A-v_{i+1})u_{i+1}+(\lambda_{i+1/2}^A+v_i)u_i\right].
\end{align*}
By choosing $\lambda_{i+1/2}^A\geq \max\{|v_i|,|v_{i+1}|\}$, 
we get $u_{i+1/2}^A\geq 0$ provided $u_i, ~u_{i+1}\geq 0$.
For simplicity, we use $\lambda_{i+1/2}^A=\epsilon/\sigma^2=1$. 
With respect to the global bounds, we use $u^{\min}=0$.
In Figure \ref{fig:steady}, we show the solution at different times using the different algorithms. 
In addition, we obtain the numerical solution at $t=20$ and perform a convergence test 
using \eqref{steady_state} as reference solution. The results are summarized in Table \ref{table:steady}.
For the coarser grids, the WENO-SDIRK method leads to small undershoots. 
The violations of the global bounds are eliminated by each of the MPP methods. 
%In Figure \ref{fig:steady_residual}, we plot the evolution of the time residual
%\begin{align*}
%  r^{n+1} = \left|\left|\frac{u^{n+1}-u^n}{\Delta t}\right|\right|_{\ell^2}, \qquad n\geq 0.
%\end{align*}
%We can see that all methods converge to a steady state solution. By refining the grid, the 
%steady state numerical solution converges to \eqref{steady_state}.
%Finally, in Figure \ref{fig:num_iter_sol_type_5} 
%we show the number of iterations using FCT-SDIRK and GMC-SDIRK. 

\begin{figure}[!ht]
  \begin{center}
    \includegraphics[scale=0.275]{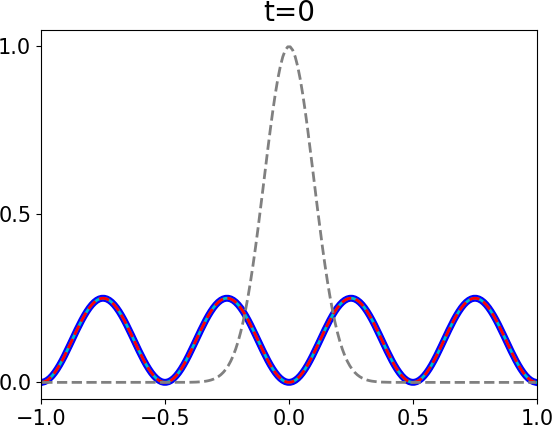}\qquad
    \includegraphics[scale=0.275]{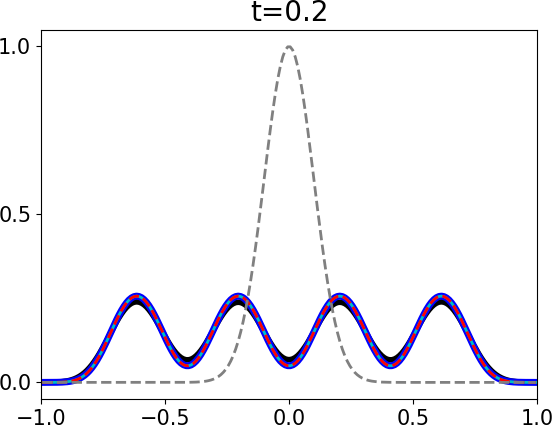}\qquad
    \includegraphics[scale=0.275]{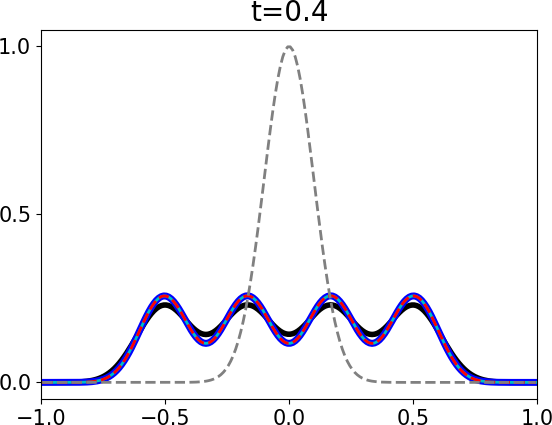}\qquad
    \includegraphics[scale=0.275]{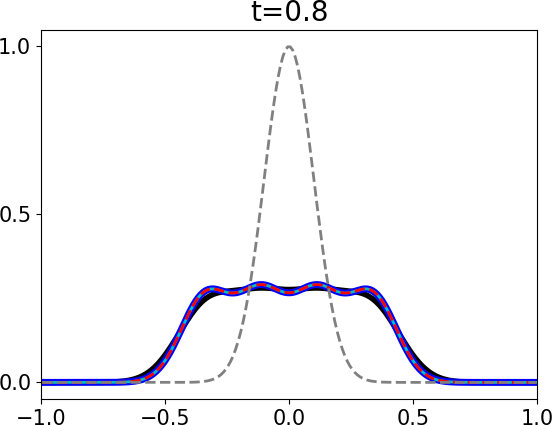} 

    \vspace{10pt}
    \includegraphics[scale=0.275]{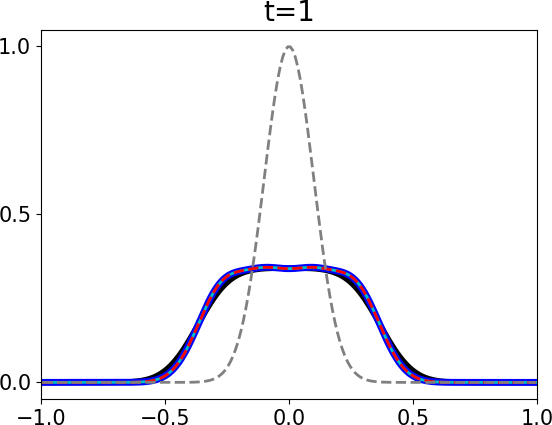}\qquad
    \includegraphics[scale=0.275]{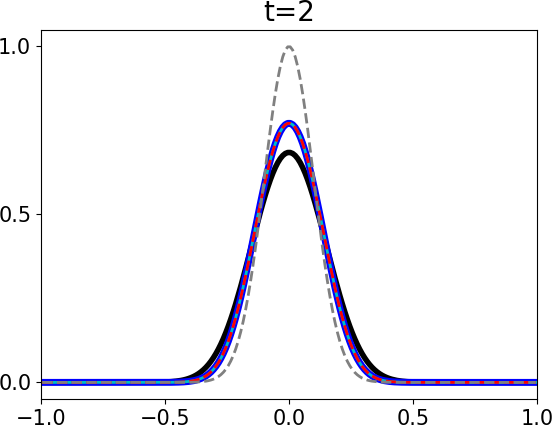}\qquad
    \includegraphics[scale=0.275]{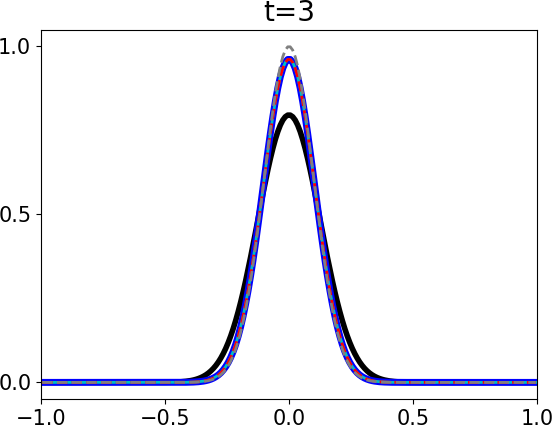}\qquad
    \includegraphics[scale=0.275]{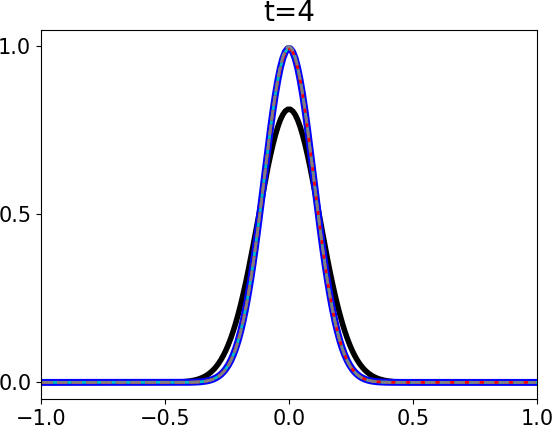} 
  \end{center}
  \caption{Numerical solution of the nonlinear problem \eqref{steady_state_prob} at different times.
    The simulation is done using $N_h=200$ cells. 
    The results with Low-BE, WENO-SDIRK, FCT-SDIRK and GMC-SDIRK are
    shown in black, blue, cyan and dashed red, respectively. 
    The dashed gray line is the steady state solution \eqref{steady_state}.
    \label{fig:steady}}
\end{figure}

\begin{table}[!h]\scriptsize
  \begin{center}
    \begin{tabular}{||c||c|c|c||c|c|c||c|c|c||c|c|c||} \hline
      & \multicolumn{3}{c||}{Low-BE} 
      & \multicolumn{3}{c||}{WENO-SDIRK} 
      & \multicolumn{3}{c||}{FCT-SDIRK} 
      & \multicolumn{3}{c||}{GMC-SDIRK} \\ \hline 
      $\Delta x$ & $E_1$ & rate & $\delta$ & $E_1$ & rate & $\delta$ & $E_1$ & rate & $\delta$  & $E_1$ & rate & $\delta$  \\ \hline
      1/25  & 1.87e-01 &   -- & 1.98E-05 & 3.24E-02 &   -- & -3.64e-03 & 3.24E-02 &   -- & -4.34e-19 & 3.24E-02 &  --  & -8.40e-26 \\
      1/50  & 1.31e-01 & 0.51 & 2.10E-08 & 2.09E-03 & 3.95 & -8.03e-05 & 2.09E-03 & 3.95 & -5.42e-20 & 2.09E-03 & 3.95 & -3.38e-30 \\
      1/100 & 8.34e-02 & 0.64 & 6.09E-12 & 1.27E-04 & 4.05 &  3.41e-23 & 1.27E-04 & 4.05 &  3.41e-23 & 1.27E-04 & 4.05 &  3.41e-23 \\
      1/200 & 4.91e-02 & 0.76 & 2.14E-15 & 2.64E-05 & 2.26 &  1.68e-22 & 2.64E-05 & 2.26 &  1.68e-22 & 2.64E-05 & 2.26 &  1.68e-22  \\ \hline
    \end{tabular}
    \caption{Grid convergence study for the nonlinear problem \eqref{steady_state_prob}.\label{table:steady}}
  \end{center}
\end{table}

\subsection{Two-dimensional solid rotation}\label{sec:num_2d_rotation}
The first two-dimensional test that we consider is the solid body rotation benchmark 
\cite{leveque1996high}, which is given by the variable-coefficient advection equation
\begin{align}\label{2D_solid_rot}
  u_t+[2\pi(0.5-y)u]_x+[2\pi(x-0.5)u]_y=0, \quad (x,y)\in[0,1]^2,
\end{align}
with periodic boundary conditions. The initial condition, shown in Figure \ref{fig:threeBody}, is 
\begin{subequations}\label{3_body_problem}
\begin{align}
  u(x,y,t=0) = 
  \begin{cases}
    u^{\text{hump}}(x,y) & \mbox{ if } ~\sqrt{(x-0.25)^2+(y-0.5)^2} \leq 0.15, \\
    u^{\text{cone}}(x,y) & \mbox{ if } ~\sqrt{(x-0.5)^2+(y-0.25)^2} \leq 0.15, \\
    1 & \mbox{ if } ~(x,y) \in \Omega^{\text{disk}}, \\
    0 & \mbox{ otherwise},
  \end{cases}
\end{align}
where 
\begin{align}
  u^{\text{hump}} = \frac{1}{4} + \frac{1}{4}\cos\left(\frac{\pi\sqrt{(x-0.25)^2+(y-0.5)^2}}{0.15}\right),
  \qquad
  u^{\text{cone}} = 1 - \frac{\sqrt{(x-0.5)^2+(y-0.25)^2}}{0.15}, \\
  \Omega^{\text{disk}} = \{(x,y)\in\mathbb{R} ~|~ \sqrt{(x-0.5)^2+(y-0.75)^2}\leq 0.15\}
  \backslash\{(x,y)\in\mathbb{R} ~|~ |x-0.5|< 0.025, y< 0.85\}.
\end{align}
\end{subequations}

\begin{figure}[!h]
  \begin{center}\scriptsize
    \includegraphics[scale=0.4]{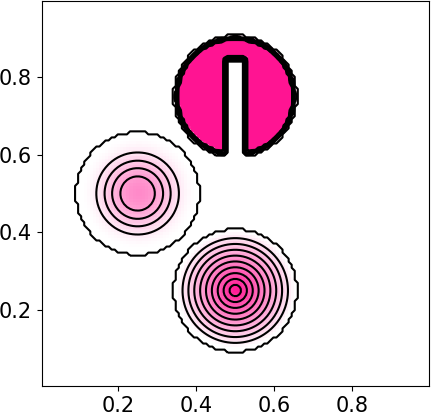}
  \end{center}
  \caption{Initial condition, given by \eqref{3_body_problem}, 
    for the solid rotation problem \eqref{2D_solid_rot}
    and the periodic vortex problem \eqref{2D_swirling}.
    \label{fig:threeBody}}
\end{figure}

The velocity field rotates the initial data around $(x,y)=(0.5, 0.5)$. 
After every revolution, the exact solution coincides with the initial condition. 
We solve the problem up to $t=1$. To facilitate the comparison against other high-order 
methods, like the ones in \cite{kuzmin2020entropy} and references therein, 
we use $N_h=128^2$ uniform square cells.
For simplicity, we use $\lambda_{ij}^A=\pi$. The global bounds are $u^{\min}=0$ and $u^{\max}=1$.
The solution for the different methods is shown in Figure \ref{fig:2D_solid_rot}. 
As expected, WENO-SDIRK produces small undershoots and overshoots. 
Both FCT-SDIRK and GMC-SDIRK produce MPP solutions and preserve
similar accuracy than WENO-SDIRK.

\begin{figure}[!h]
  \begin{center}\scriptsize
    \subfloat[Low-BE]{
      \begin{tabular}{c}
        $E_1=1.32\times 10^{-1}$\\
        $\delta=9.70\times 10^{-25}$ \\
        $u\in[1.67\times 10^{-2}, 0.2772]$ \\
        \includegraphics[scale=0.35]{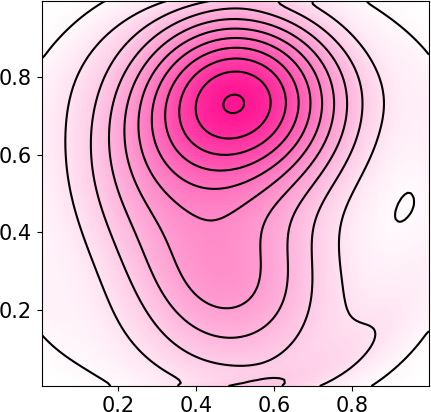}
      \end{tabular}} ~
    \subfloat[WENO-SDIRK]{
      \begin{tabular}{c}
        $E_1=2.09\times 10^{-2}$ \\
        $\delta= -4.05\times 10^{-2}$ \\
        $u\in[-7.80\times 10^{-4}, 1.0023]$ \\
        \includegraphics[scale=0.35]{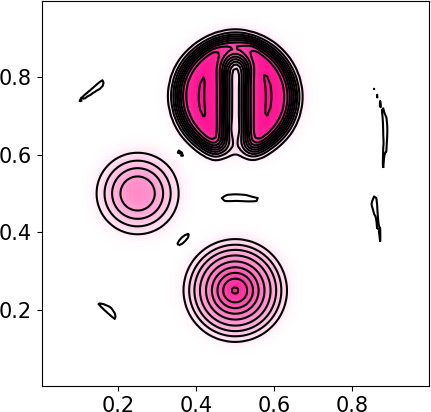}
    \end{tabular}} ~
    \subfloat[FCT-SDIRK]{
      \begin{tabular}{c}
        $E_1=2.22\times 10^{-2}$ \\
        $\delta=6.36\times 10^{-25}$ \\
        $u\in [7.85\times 10^{-12}, 0.9936]$ \\
        \includegraphics[scale=0.35]{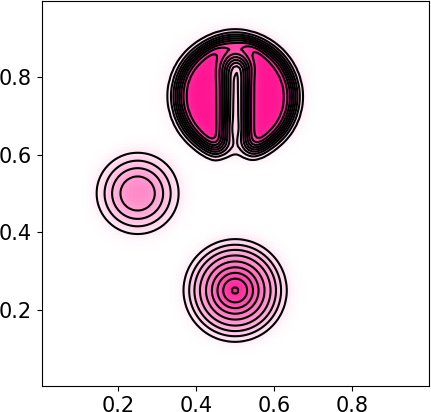}
    \end{tabular}} ~
    \subfloat[GMC-SDIRK]{
      \begin{tabular}{c}
        $E_1=2.10\times 10^{-2}$ \\
        $\delta=-1.81\times 10^{-22}$ \\
        $u\in[2.77\times 10^{-17}, 0.9998]$ \\
        \includegraphics[scale=0.35]{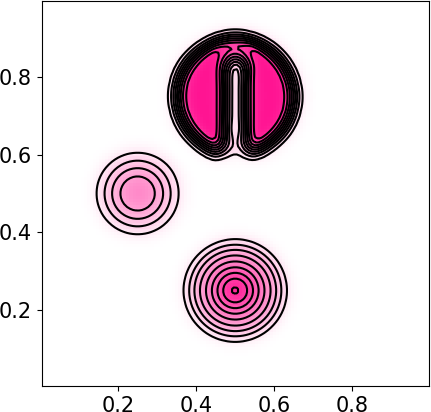}
    \end{tabular}}
  \end{center}
  \caption{Numerical solution (at $t=1$) of the linear problem \eqref{2D_solid_rot}
    with initial condition \eqref{3_body_problem} using the different schemes. 
    In all cases, we use $N_h=128^2$ uniform square cells. 
    The color scale in the plots goes from white to pink, which
    corresponds to $0$ and $1$, respectively. 
    \label{fig:2D_solid_rot}}
\end{figure}

% ***************************************************** %
% ********** TWO DIMENSIONAL PERIODIC VORTEX ********** %
% ***************************************************** %
\subsection{Two-dimensional periodic vortex}\label{sec:num_2D_periodic_vortex}
Let us now solve another benchmark problem proposed in \cite{leveque1996high}. 
The problem is given by 
\begin{align}\label{2D_swirling}
  u_t
  +[ \sin^2(\pi x)\sin(2\pi y)\cos(\pi t/T) u]_x
  -[ \sin^2(\pi y)\sin(2\pi x)\cos(\pi t/T) u]_y=0, \quad (x,y)\in[0,1]^2,
\end{align}
with periodic boundary conditions. The initial condition is the same as before; i.e., 
$u(x,y,t=0)$ is given by \eqref{3_body_problem}. 
From $t=0$ to $t=T/2$, the velocity field performs a swirling deformation to the initial condition. 
At $t=T/2$, the velocity reverses direction making the exact solution coincide with 
the initial condition at $t=T$. We solve the problem up to $t=T=1.5$ using $N_h=128^2$ uniform
square cells. For simplicity, we use $\lambda_{ij}^A=1$. The global bounds are $u^{\min}=0$ and $u^{\max}=1$. 
In Figure \ref{fig:2D_swirling}, we show the solution at $t=T/2$ and $t=T$ using the different methods. 
WENO-SDIRK violates the bounds while the rest of the schemes do not. The high-order accuracy of 
WENO-SDIRK is preserved by FCT-SDIRK and GMC-SDIRK.

\begin{figure}[!h]
  \begin{center}\scriptsize
    \subfloat[Low-BE]{
      \begin{tabular}{c}
        \includegraphics[scale=0.35]{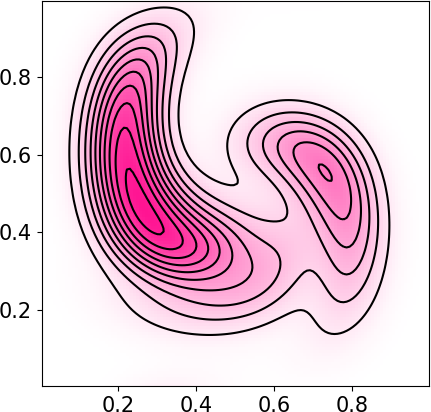} \\
        \vspace{3.5pt} \\ 
        $E_1=1.20\times 10^{-1}$\\
        $\delta=1.43\times 10^{-44}$ \\
        $u\in[8.52\times 10^{-4}, 0.3798]$ \\
        \includegraphics[scale=0.35]{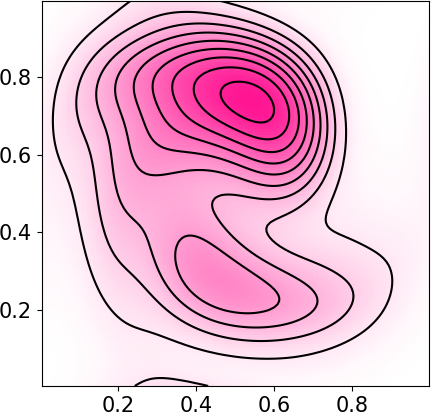}
      \end{tabular}} ~
    \subfloat[WENO-SDIRK]{
      \begin{tabular}{c}
        \includegraphics[scale=0.35]{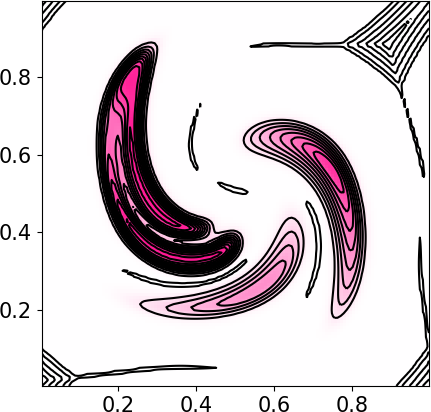} \\
        \vspace{3.5pt} \\ 
        $E_1=2.97\times 10^{-2}$\\
        $\delta=-4.03\times 10^{-2}$ \\
        $u\in[-7.01\times 10^{-3}, 1.0366]$ \\
        \includegraphics[scale=0.35]{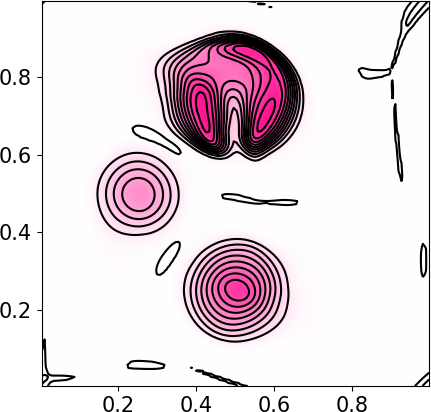}
    \end{tabular}} ~
    \subfloat[FCT-SDIRK]{
      \begin{tabular}{c}
        \includegraphics[scale=0.35]{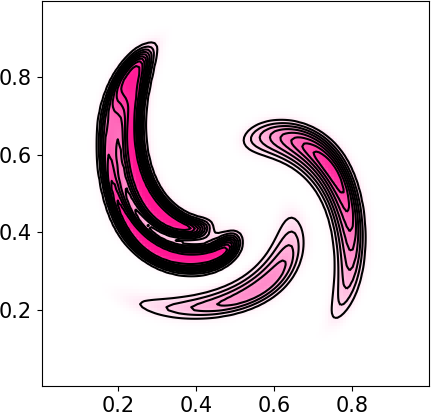} \\
        \vspace{3.5pt} \\ 
        $E_1=2.99\times 10^{-2}$\\
        $\delta=-1.73\times 10^{-18}$ \\
        $u\in[0, 0.9988]$ \\
        \includegraphics[scale=0.35]{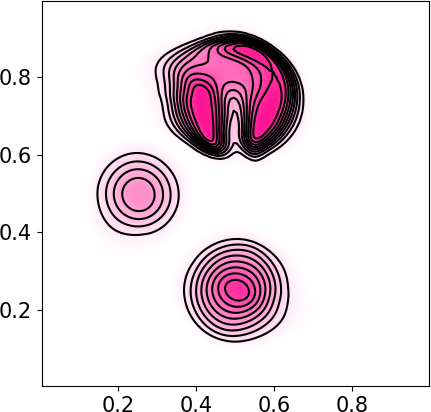}
    \end{tabular}} ~
    \subfloat[GMC-SDIRK]{
      \begin{tabular}{c}
        \includegraphics[scale=0.35]{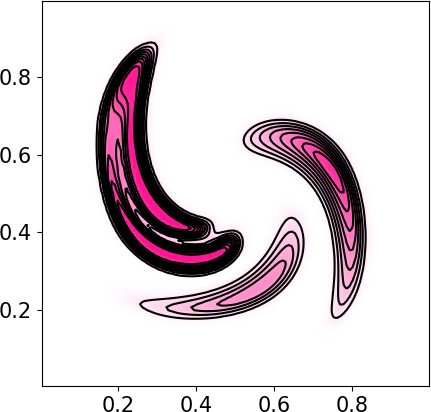} \\
        \vspace{3.5pt} \\ 
        $E_1=2.99\times 10^{-2}$\\
        $\delta=-3.23\times 10^{-19}$ \\
        $u\in[-2.89\times 10^{-26}, 0.9993]$ \\
        \includegraphics[scale=0.35]{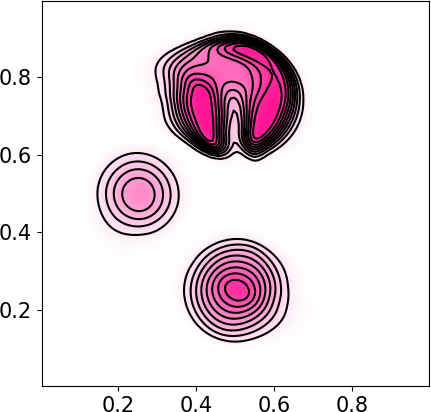}
    \end{tabular}}
  \end{center}
  \caption{Numerical solution of the linear problem \eqref{2D_swirling}
    with initial condition \eqref{3_body_problem} using the different schemes. 
    In all cases, we use $N_h=128^2$ uniform square cells. 
    We show the solution at $t=T/2$ and $t=T=1.5$ in the first and second rows, 
    respectively. The color scale in the plots goes from white to pink, which
    corresponds to $0$ and $1$, respectively. 
    \label{fig:2D_swirling}}
\end{figure}

% ****************************************************** %
% ********** TWO DIMENSIONAL LINEAR ADVECTION ********** %
% ****************************************************** %
\subsection{Two-dimensional linear advection diffusion equation}\label{sec:num_2d_linear_convection_diffusion}
Next we solve the linear advection-diffusion equation
\begin{subequations}\label{2D_lin_conv_diff}
\begin{align}
  u_t+u_x+u_y=\epsilon(u_{xx}+u_{yy}), \quad (x,y)\in[0,2\pi]^2,
\end{align}
with $\epsilon=1/1000$.
Following \cite[Example 3.5]{xiong2015high}, 
we impose periodic boundary conditions and take 
\begin{align}
  u(x,y,0) = \sin^4(x+y)
\end{align}
\end{subequations}
as initial condition. The exact solution, also found in \cite{xiong2015high}, is 
\begin{align*}
u(x,y,t)=\frac{3}{8}-\frac{1}{2}\exp(-8\epsilon t)\cos(2(x+y-2t))
+\frac{1}{8}\exp(-32\epsilon t)\cos(4(x+y-2t)).
\end{align*}
We solve the problem up to the final time $t=0.5$ using $\lambda_{ij}^A=1$.
The global bounds are $u^{\min}=0$ and $u^{\max}=1$. The results of a convergence 
study are shown in Table \ref{table:2D_lin_conv_diff}. 
The high-order method WENO-SDIRK produces small 
undershoots and/or overshoots. The rest of the methods produce MPP solutions. 
To recover the high-order accuracy from WENO-SDIRK, we found it necessary to perform 
at least two iterations with FCT-SDIRK and use $\gamma\geq 1$ with GMC-SDIRK.
%In Figure \ref{fig:2D_lin_conv_diff}, we show the solution at $t=0.5$ 
%using the different methods with $\Delta x=\Delta y=1/200$.

\begin{table}[!h]\scriptsize
  \begin{center}
    \begin{tabular}{||c||c|c|c||c|c|c||c|c|c||c|c|c||} \hline
      & \multicolumn{3}{c||}{Low-BE} 
      & \multicolumn{3}{c||}{WENO-SDIRK} 
      & \multicolumn{3}{c||}{FCT-SDIRK (2 iter)} 
      & \multicolumn{3}{c||}{GMC-SDIRK ($\gamma=1$)} \\ \hline 
      $\Delta x=\Delta y$ & $E_1$ & rate & $\delta$ & $E_1$ & rate & $\delta$ & $E_1$ & rate & $\delta$  & $E_1$ & rate & $\delta$  \\ \hline
1/25  & 7.72 &   -- & 2.46E-02 & 3.33E-01 &  --  & -4.18E-03 & 3.07E-01 &  --  & -6.94E-18 & 2.88E-01 &  --  & 3.40E-04 \\
1/50  & 5.05 & 0.61 & 3.26E-03 & 4.75E-02 & 2.81 & -5.72E-04 & 5.47E-02 & 2.49 & -4.34E-19 & 4.24E-02 & 2.76 & 1.18E-04 \\
1/100 & 2.98 & 0.76 & 2.73E-04 & 2.63E-03 & 4.17 & -3.11E-05 & 3.06E-03 & 4.16 & -5.42E-20 & 2.47E-03 & 4.10 & 7.44E-06 \\
1/200 & 1.64 & 0.86 & 1.90E-05 & 1.08E-04 & 4.60 & -7.23E-07 & 1.17E-04 & 4.70 & -3.39E-21 & 1.08E-04 & 4.52 & 4.56E-07 \\ \hline
    \end{tabular}
    \caption{Grid convergence study for the linear problem \eqref{2D_lin_conv_diff}. \label{table:2D_lin_conv_diff}}
  \end{center}
\end{table}

%\begin{figure}[!h]
%  \begin{center}
%    \subfloat[Low-BE]{\includegraphics[scale=0.375]{}}      \qquad
%    \subfloat[WENO-SDIRK]{\includegraphics[scale=0.375]{}} \qquad
%    \subfloat[FCT-SDIRK]{\includegraphics[scale=0.375]{}}   \qquad
%    \subfloat[GMC-SDIRK]{\includegraphics[scale=0.375]{}}
%  \end{center}
%  \caption{Numerical solution (at $t=0.5$) of the linear problem \eqref{2D_lin_conv_diff}
%    using the different schemes with $N_h=200^2$ uniform square cells. 
%    The color scale in the plots goes from white to pink, which
%    corresponds to $0$ and $1$, respectively. 
%    The $L^1$-error and value of $\delta$ for each case is displayed in the last row of 
%    Table \ref{table:2D_lin_conv_diff}.
%    \label{fig:2D_lin_conv_diff}}
%\end{figure}

\subsection{KPP problem}\label{sec:num_2D_kpp}
We close this work with the two-dimensional nonlinear problem 
\begin{subequations}\label{kpp}
\begin{align}
  u_t + \nabla\cdot \bff(u) = \epsilon \Delta u, \quad (x,y)\in [-2,2]\times[-2.5,1.5]
\end{align}
with a nonconvex flux function
\begin{align}
  \bff(u)=(\sin(u),\cos(u)).
\end{align}
We impose periodic boundary conditions and take 
\begin{align}
  u(x,y,t=0) = 
  \begin{cases}
    \frac{14\pi}{4}, &\mbox{ if } \sqrt{x^2+y^2}\leq 1, \\
    \frac{\pi}{4},   &\mbox{ otherwise }
  \end{cases}
\end{align}
\end{subequations}
as initial condition.
We choose $\epsilon=\{0,0.01\}$. 
When $\epsilon=0$, the problem is known as KPP \cite{kurganov2007adaptive}.
This is a challenging test for verification of preservation of the maximum principle 
and entropy stability properties. The entropy solution contains 
a rotating wave structure, which some numerical methods -- even some first-order methods -- struggle to capture;
see for example \cite[Figure 1]{guermond2016invariant}.
The true solution remains in the interval $[\pi/4, 14\pi/4]$.
In \cite{guermond2014second}, the authors remark that using flux limiters to guarantee 
$u\in[\pi/4, 14\pi/4]$ is not enough to make the method used there converge to the entropy solution. 

In Figure \ref{fig:kpp_low_uBE}, 
we show the solution using Low-BE with $N_h=128^2$ uniform square cells. 
This method is not only MPP, but also entropy stable for 
any entropy pair of \eqref{kpp} with $\epsilon=0$; see e.g. \cite{guermond2016invariant}.
As a result, Low-BE converges to the entropy satisfying solution;
however, the method is excessively dissipative. 
The high-order baseline method WENO-SDIRK, shown in Figure
\ref{fig:kpp_WENO_SDIRK}, delivers sharp fronts but violates the maximum
principle.
In our experiments, WENO-SDIRK is able to reproduce the rotating wave structure of the 
entropy satisfying solution. Both FCT-SDIRK and GMC-SDIRK guarantee the solution is 
within the correct bounds without 
a noticeable degradation in accuracy; see Figures \ref{fig:kpp_FCT_SDIRK} and 
\ref{fig:kpp_GMC_SDIRK}. 

We could improve the robustness of the high-order methods by adding numerical
dissipation of entropy. It is important, however, to do this in a way compatible with 
the rest of the algorithm to still guarantee high-order accuracy and preservation of the maximum principle.
As future work, we plan to combine the methodology in this work with that from
\cite{kuzmin2020algebraic, kuzmin2020entropy}.
Our aim is to achieve an entropy stable and MPP high-order method. 

Finally, in Figure \ref{fig:kpp_eps0p01}, we show the results using $\epsilon=0.01$.

\begin{figure}[!h]
  \begin{center}\scriptsize
    \subfloat[Low-BE\label{fig:kpp_low_uBE}]{
      \begin{tabular}{c}
        $\delta=-1.78\times 10^{-15}$ \\
        $u\in[0.7853, 10.9845]$ \\
        \includegraphics[scale=0.35]{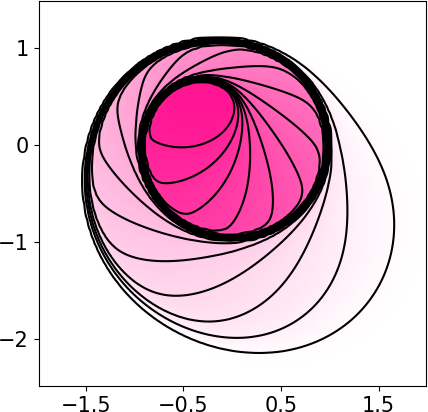} 
      \end{tabular}} ~
    \subfloat[WENO-SDIRK\label{fig:kpp_WENO_SDIRK}]{
      \begin{tabular}{c}
        $\delta=-6.03\times 10^{-8}$ \\
        $u\in[0.7853, 10.9955]$ \\
        \includegraphics[scale=0.35]{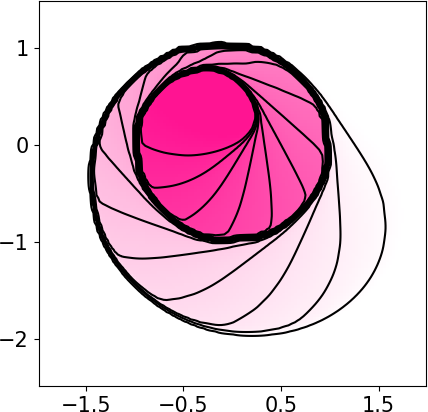} 
    \end{tabular}} ~
    \subfloat[FCT-SDIRK\label{fig:kpp_FCT_SDIRK}]{
      \begin{tabular}{c}
        $\delta=-1.07\times 10^{-14}$ \\
        $u\in[0.7853, 10.9955]$ \\
        \includegraphics[scale=0.35]{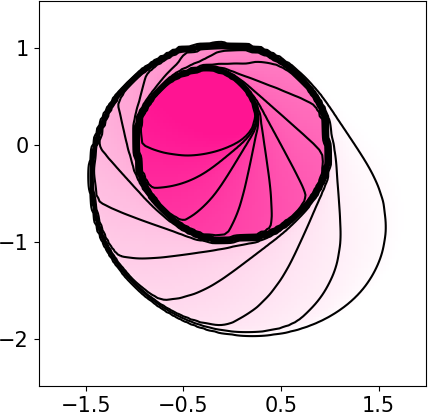} 
    \end{tabular}} ~
    \subfloat[GMC-SDIRK\label{fig:kpp_GMC_SDIRK}]{
      \begin{tabular}{c}
        $\delta=-2.31\times 10^{-14}$ \\
        $u\in[0.7853, 10.9955]$ \\
        \includegraphics[scale=0.35]{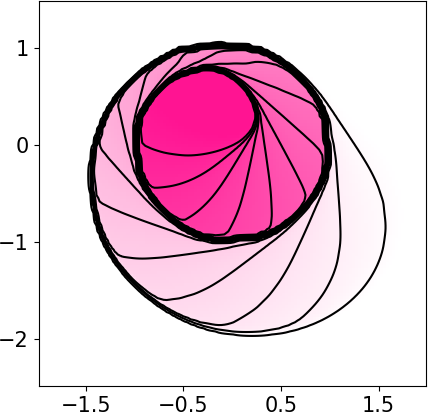} 
    \end{tabular}}
  \end{center}
  \caption{Numerical solution (at $t=1$) of the nonlinear problem \eqref{kpp} with $\epsilon=0$.
    We use different schemes with $N_h=128^2$ uniform square cells. 
    The color scale in the plots goes from white to pink, which
    corresponds to $\pi/4$ and $14\pi/4$, respectively. 
    \label{fig:kpp_eps0}}
\end{figure}

\begin{figure}[!h]
  \begin{center}\scriptsize
    \subfloat[Low-BE]{
      \begin{tabular}{c}
        $\delta=-3.55\times 10^{-15}$ \\
        $u\in[0.7853, 10.9160]$ \\
        \includegraphics[scale=0.35]{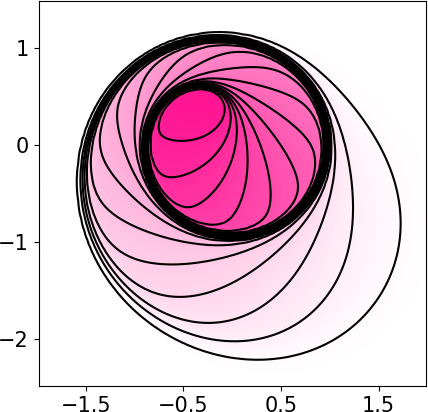} 
      \end{tabular}} ~
    \subfloat[WENO-SDIRK]{
      \begin{tabular}{c}
        $\delta=-5.95\times 10^{-8}$ \\
        $u\in[0.7853, 10.9953]$ \\
        \includegraphics[scale=0.35]{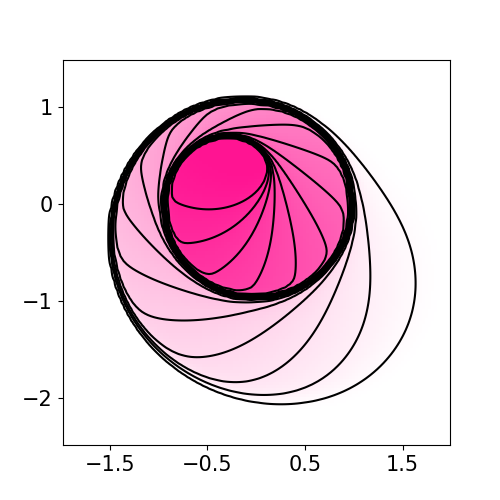} 
    \end{tabular}} ~
    \subfloat[FCT-SDIRK]{
      \begin{tabular}{c}
        $\delta=-1.07\times 10^{-14}$ \\
        $u\in[0.7853, 10.9952]$ \\
        \includegraphics[scale=0.35]{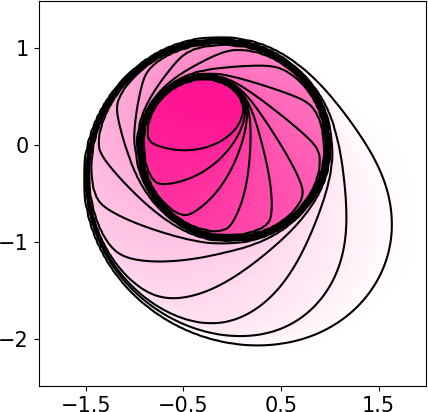} 
    \end{tabular}} ~
    \subfloat[GMC-SDIRK]{
      \begin{tabular}{c}
        $\delta=-1.78\times 10^{-15}$ \\
        $u\in[0.7853, 10.9953]$ \\
        \includegraphics[scale=0.35]{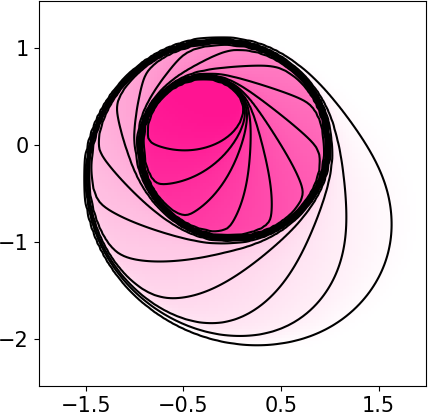} 
    \end{tabular}}
  \end{center}
  \caption{Numerical solution (at $t=1$) of the nonlinear problem \eqref{kpp} with $\epsilon=0.01$.
    We use different schemes with $N_h=128^2$ uniform square cells. 
    The color scale in the plots goes from white to pink, which
    corresponds to $\pi/4$ and $14\pi/4$, respectively. 
    \label{fig:kpp_eps0p01}}
\end{figure}

% ********************************* %
% ********** CONCLUSIONS ********** %
% ********************************* %
\section{Conclusions}\label{sec:conclusions}
We have presented two techniques to obtain maximum principle preserving (MPP) numerical
schemes for scalar nonlinear convection-diffusion PDEs, following an approach
similar to that of \cite{exGMCL}, which focused on explicit methods for hyperbolic
problems. Both methodologies 
are based on combining a low-order MPP scheme with a high order scheme, limiting the
contribution from their difference.
While we have focused on using finite volumes in space and Runge-Kutta 
methods in time, the limiters developed here could be used with a wide range 
of space and time discretizations. 
Using these limiters with appropriate discretizations,
one can obtain a scheme whose local error is of any desired order and
use a time step that is restricted only by accuracy considerations. 
That is, the methods are MPP for time steps of any size. 

Since our MPP limiters don't impose a local TVD or non-oscillatory
property on their own, a key ingredient in our 
methodology is to start with a high-order spatial discretization (like WENO) that 
produces only small violations of the maximum principle. 
As an alternative to WENO limiting, one could employ for example
finite element methods that with flux limiters that 
impose local bounds and then relax the constraint around smooth extrema;
see for example \cite{lohmann2017flux, kuzmin2020subcell}.
Since our time discretization method need not be SSP, we avoid the
well-known order barriers to which SSP methods are subject.

In the future, we plan to combine these limiters with the algebraic entropy-stable  
fluxes from \cite{kuzmin2020algebraic, kuzmin2020entropy} to obtain a high-order, 
entropy-stable, and MPP scheme. 
In addition, we plan to apply these limiters to systems of PDEs where bound
preservation is important, such as the compressible Navier-Stokes equations.

\section*{Acknowledgment}
We are grateful to Prof. Dmitri Kuzmin for important discussions that formed
the basis of this work, for providing feedback on drafts of the paper and for suggesting 
the fixed point iteration \eqref{gmc_iter}.

\section*{Declarations}
\subsection*{Funding}
This work was funded by King Abdullah University of Science and Technology (KAUST) in Thuwal, Saudi Arabia.

\subsection*{Conflicts of interest/Competing interests}
The authors declare that they have no known conflicts of interest, competing interests 
or personal relationships that could have appeared to influence the work reported in this paper.

\subsection*{Availability of data and material}
The code to reproduce the datasets (in all tables) is available at 
\url{https://github.com/manuel-quezada/BP\_Lim\_for\_imp\_RK\_Methods}.

\subsection*{Code availability}
The code to reproduce the numerical experiments is available at 
\url{https://github.com/manuel-quezada/BP\_Lim\_for\_imp\_RK\_Methods}.

\bibliographystyle{plain}
\bibliography{refs}

\newpage
\appendix
% ************************************** %
% ********** PSEUDO-JACOBIANS ********** %
% ************************************** %
\section{Pseudo-Jacobians for Newton-like methods}\label{sec:jacobians}
To use either the FCT or the GMC limiters, first we need to solve $M$ (non)linear systems to 
obtain the high-order fluxes. The FCT method requires solving an extra system
to obtain the low-order solution and its fluxes. In contrast, to use the GMC limiters we do not 
need to obtain the low-order solution, but we need to perform the iterative algorithm \eqref{gmc_iter}. 
An efficient solver for these systems is out of the scope of this work. 
However, for completeness, we describe in this section how we solve these (non)linear systems.

\subsection{Newton-type method for the low-order baseline scheme}
The low-order discretization, given by \eqref{low-order-scheme_bar_states} or \eqref{low-order-scheme_fluxes},
can be solved via Newton's method by defining a residual and its Jacobian. 
Let 
\begin{align*}
  r_i^{L,(k)}=u_i^{L, (k)}-u_i^n+\frac{\Delta t}{|K_i|}\sum_{l\in \N_i}|S_{il}|G_{il}^{L}\left(u^{L,(k)}\right)=0,
  \qquad
  J_{ij}^{L,(k)}= \frac{\partial r_i^{L,(k)}}{\partial u_j^{L,(k)}},
\end{align*}
be the entries of the residual and the Jacobian (evaluated at the $k$-th Newton iteration), respectively.
The corresponding iterative algorithm is 
\begin{align}\label{low_order_newton}
  J^{L,(k)}\left(u^{L,(k+1)}-u^{L,(k)}\right) = - r^{L,(k)}.
\end{align}
To simplify the computation of the Jacobian, we ignore the dependence of $\lambda_{ij}^A$ 
with respect to the solution.
The entries of the (pseudo) Jacobian are
\begin{align}\label{jac_low_order}
  J_{ij}^{L,(k)} = \frac{\partial r_i^{L,(k)}}{\partial u_j^{L,(k)}}
  =\delta_{ij}+\frac{\Delta t}{|K_i|}\sum_{l\in \N_i}|S_{il}|\frac{\partial G_{il}^{L}\left(u^{L,(k)}\right)}{\partial u_j^{L,(k)}},
\end{align}
where $\delta_{ij}$ is the Kronecker delta function. 
For the one-dimensional problem, 
\begin{align*}
  \sum_{l\in \N_i} |S_{il}|\frac{\partial G_{il}^{L}\left(u^{L,(k)}\right)}{\partial u_j^{L,(k)}} 
  &= 
  \frac{\partial \left[F_{i+1/2}^{L,(k)}-F_{i-1/2}^{L,(k)}
  -\left(P_{i+1/2}^{L,(k)}-P_{i-1/2}^{L,(k)}\right)\right]}{\partial u_j^{L,(k)}}=:A_{ij}\left(u^{L,(k)}\right),
\end{align*}
where $F_{i+1/2}^{L,(k)}=F_{ii+1}^L\left(u^{L,(k)}\right)$ 
and $P_{i+1/2}^{L,(k)}=P_{ii+1}^L\left(u^{L,(k)}\right)$
are the low-order fluxes, given by \eqref{conv-flux_low_order} and \eqref{diff-flux_low_order}, respectively.
Using \eqref{conv-flux_low_order} and \eqref{diff-flux_low_order}, we get
\begin{align*}
  A_{ij}(u)
  =\begin{cases}
  -\frac{1}{2}\bff^\prime(u_{i-1})-\frac{\lambda^A_{i-1/2}}{2}
  -\frac{1}{\Delta x}c_{i-1/2}+\frac{1}{2}c^\prime_{i-1/2}\left(\frac{u_i-u_{i-1}}{\Delta x}\right)
  , &\mbox{ if } j=i-1, \\
  \frac{\lambda^A_{i+1/2}}{2}+\frac{\lambda^A_{i-1/2}}{2}
  +\frac{1}{\Delta x}c_{i+1/2}-\frac{1}{2}c^\prime_{i+1/2}\left(\frac{u_{i+1}-u_i}{\Delta x}\right)
  +\frac{1}{\Delta x}c_{i-1/2}+\frac{1}{2}c^\prime_{i-1/2}\left(\frac{u_{i}-u_{i-1}}{\Delta x}\right)
  , &\mbox{ if } j=i, \\
  \frac{1}{2}\bff^\prime(u_{i+1})-\frac{\lambda^A_{i+1/2}}{2} 
  - \frac{1}{\Delta x}c_{i+1/2} - \frac{1}{2}c^\prime_{i+1/2}\left(\frac{u_{i+1}-u_{i}}{\Delta x}\right)
  , &\mbox{ if } j=i+1, \\
  0, &\mbox{ otherwise}.
  \end{cases}
\end{align*}
We run the iterative algorithm \eqref{low_order_newton} until
\begin{align*}
  \left|\left|r^{L,(k+1)}\right|\right|_{\ell^2}\leq \text{tol}^{L}=10^{-12}.
\end{align*}

\subsection{Newton-type method for the high-order baseline scheme}
For the high-order full discretization \eqref{high-order-scheme}, which is based on 
a DIRK method with $M$ stages, we need the $M$ intermediate solutions $y^{(m)}$, given by 
\eqref{interm_RK_soln}. Each of these intermediate solutions can be solved via Newton's method.
Let $y^{(m,k)}$ denote the $k$-th Newton iteration of the intermediate solution $y^{(m)}$. 
Then, 
\begin{align*}
  r_i^{\RK,(m,k)} &= y_i^{(m,k)}-u_i^n+\frac{\Delta t}{|K_i|}\sum_{l\in \N_i}|S_{il}|\sum_{s=1}^m a_{ms}
  \Bigg[
    \underbrace{F_{il}^{H}\left(y^{(s,k)},\bfx_{ij}\right)-P_{il}^{H}\left(y^{(s,k)},\bfx_{ij}\right)}_
               {\textstyle=:G^{\RK}_{il}\left(y^{(s,k)},\bfx_{ij}\right)}
    \Bigg], \\
  J_{ij}^{\RK,(m,k)} &= \frac{\partial r_i^{\RK,(m,k)}}{\partial y_j^{(m,k)}}
\end{align*} 
are the entries of the residual and the Jacobian (evaluated at the $k$-th Newton iteration), respectively.
The iterative algorithm to solve for the $m$-th intermediate solution is 
\begin{align}\label{rk_stages_newton}
  J^{\RK,(m,k)}\left(y^{(m,k+1)}-y^{(m,k)}\right) = -r^{\RK,(m,k)}.
\end{align}
The entries of the Jacobian are 
\begin{align*}
  J_{ij}^{\RK,(m,k)} = \frac{\partial r_i^{\RK,(m,k)}}{\partial y_j^{(m,k)}}
  &=\delta_{ij}
  +\frac{\Delta t}{|K_i|}\sum_{l\in \N_i}|S_{il}|
  \Bigg[
    \sum_{s=1}^{m-1} a_{ms}\underbrace{\frac{\partial G_{il}^{\RK}\left(y^{(s,k)},\bfx_{ij}\right)}{\partial y_j^{(m,k)}}}_{=0}
    +a_{mm}\frac{\partial G_{il}^{\RK}\left(y^{(m,k)},\bfx_{ij}\right)}{\partial y_j^{(m,k)}}
    \Bigg] \\
  &=\delta_{ij}+\frac{a_{mm}\Delta t}{|K_i|}\sum_{l\in \N_i}|S_{il}|\frac{\partial G_{il}^{\RK}\left(y^{(m,k)},\bfx_{ij}\right)}{\partial y_j^{(m,k)}}.
\end{align*}
Due to the highly nonlinear nature of WENO schemes, the computation of 
$\partial G_{il}^{\RK}\left(y^{(m,k)},\bfx_{ij}\right)/\partial y_j^{(m,k)}$ is complicated. 
Instead, we consider 
\begin{align}\label{jac_high_order}
  J_{ij}^{\RK,(m,k)} \approx 
  \delta_{ij}+\frac{a_{mm}\Delta t}{|K_i|}\sum_{l\in \N_i}|S_{il}|\frac{\partial G_{il}^{L}\left(y^{(m,k)}\right)}{\partial y_j^{(m,k)}}
\end{align}
and ignore the dependence of $\lambda^A_{ij}$ with respect to the solution. 
We run the iterative algorithm \eqref{rk_stages_newton} until 
\begin{align*}
  \left|\left|r^{\RK,(m,k+1)}\right|\right|_{\ell^2} \leq \text{tol}^{\RK} = 10^{-8}.
\end{align*}

\subsection{Pseudo-Jacobian based on linear convection-diffusion problem}\label{sec:linJac}
Simple, non-expensive but potentially inaccurate pseudo-Jacobians can be computed 
based on a linearization of \eqref{conslaw}. That is, considering
\begin{align*}
  u_t+ \mathbf{f}^\prime(\bar u) \cdot \nabla u = c(\bar u) \Delta u,
\end{align*}
where $\bar u \in[\min _\bfx u(\bfx,0), \max_\bfx u(\bfx,0)]$ is a constant based on the initial data. 
For instance, in the numerical experiments of Section \ref{sec:num}, we use 
$\bar u=\frac{1}{2}\left[\max (u(x,0)) - \min(u(x,0))\right]$. 
By doing this, we can pre-compute the factors of the Jacobian (e.g., using an LU decomposition)
to avoid recomputing the Jacobian and solving systems at every time step. The disadvantage
of this approach is that the number of Newton iterations might increase considerably.

\end{document}